\documentclass[11pt,twoside]{article}
\usepackage{mathtools}
\usepackage{amsmath,amssymb,amsthm}
\usepackage{color}

\textheight 9in \topmargin -0.5in \oddsidemargin 0.25in \textwidth
6in \evensidemargin \oddsidemargin \marginparsep 3pt
\marginparwidth 67pt


%
%
\numberwithin{equation}{section}
\newtheorem{theorem}{Theorem}[section]
\newtheorem{proposition}[theorem]{Proposition}
\newtheorem{lemma}[theorem]{Lemma}
\newtheorem{assumptionF}{Assumption}

\newtheorem{assumptionT}{Assumption}

\theoremstyle{remark}
\newtheorem{remark}[theorem]{Remark}
\newtheorem{example}[theorem]{Example}

%
%
\newcommand{\bke}[1]{\left ( #1 \right )}
\DeclarePairedDelimiter{\norm}{\lVert}{\rVert}
\DeclarePairedDelimiter{\bka}{\langle}{\rangle}
\newcommand{\N}{\mathbb{N}}
\newcommand{\R}{\mathbb{R}}
\newcommand{\C}{\mathbb{C}}

\newcommand{\ntt}{N([t,\infty))}
\newcommand{\stt}{S([t,\infty))}
\newcommand{\sourceterm}{H}
\newcommand{\profile}{W}
\newcommand{\tsum}{{\textstyle \sum}}
\newcommand{\om}{\omega}
\newcommand{\I}{\infty}
\newcommand{\ab}{{\tilde {\alpha}}}
\newcommand{\lec}{\lesssim}
\newcommand{\e}{\epsilon}

\title{Infinite soliton and kink-soliton trains \\for nonlinear Schr\"odinger equations}
\author{
Stefan Le Coz,%
\thanks{Institut de Math\'ematiques de Toulouse, Universit\'e Paul Sabatier, 118 route de Narbonne, 31062 Toulouse Cedex 9, France, E-mail address: \texttt{slecoz@math.univ-toulouse.fr}}
\and Tai-Peng Tsai%
\thanks{
Department of Mathematics, University of British Columbia, Vancouver BC
Canada V6T 1Z2, E-mail address: \texttt{ttsai@math.ubc.ca}}
}
\date{\today}

\begin{document}
\maketitle

\begin{abstract}
We look for solutions to generic nonlinear Schr\"odinger equations build upon solitons and kinks. Solitons are localized solitary  waves and kinks are their non localized counter-parts. We prove the existence of \emph{infinite soliton trains}, i.e. solutions behaving at large time as the sum of infinitely many solitons. We also show that one can attach a kink at one end of the train. Our proofs proceed by fixed point arguments around the desired profile. We present two approaches leading to different results, one based on a combination of $L^p-L^{p'}$ dispersive estimates and Strichartz estimates, the other based only on Strichartz estimates. 

\smallskip
\noindent{\it Keywords:}\quad soliton train, multi-soliton, multi-kink,
nonlinear Schr\"odinger equations.

\smallskip
\noindent{\it 2010 Mathematics Subject Classification:} 35Q55(35C08,35Q51).

\end{abstract}

\section{Introduction} \label{sec1}

We consider the nonlinear
Schr\"odinger equation 
\begin{equation}\label{v100a0}\tag{\textsc{nls}}
i\partial_t u +\Delta u  +f(u)=0, 
\end{equation}
where $u=u(t,x)$ is a complex-valued function on $\mathbb R \times
\mathbb R^d$, $d\ge 1$.

Our goal in this paper is push forward a study initiated in \cite{LLT1} on the existence of exotic solutions to  \eqref{v100a0}. We look for
\emph{infinite soliton trains}, i.e. solutions which
behave asymptotically as the sum of infinitely many solitons, possibly attached to a kink at one end.  
We want to show that such a behavior is possible for general nonlinearities under mild hypotheses. A typical
nonlinearity example is the double-power nonlinearity
\begin{equation}
\label{f-pq}
f(u) = |u|^{\alpha}u - |u|^\beta u, \quad 0<\alpha < \beta < \alpha_{\rm max}.
\end{equation}
Here and thereafter we denote the critical exponent by $\alpha_{\max} = +\infty$ for
  $d=1,2$ and $\alpha_{\max}= \frac 4 {d-2}$ for $d\ge 3$.

Let us shortly review some results on multi-solitons, i.e. solutions to \eqref{v100a0} behaving at large time as a finite sum of solitons. The inverse scattering transform provides a convenient way to build multi-solitons (see e.g. \cite{ZaSh72}), however it is limited to integrable equations (for Schr\"odinger equations, only the 1D cubic case is integrable). For non-integrable Schr\"odinger equations, one of the first result of existence of multi-solitons was obtained by Merle \cite{Me90} for $L^2$-critical equations, triggering a series of work on multi-solitons. For energy-subcritical nonlinearities,  C\^ote, Martel and Merle \cite{CoMaMe11,MaMe06} obtained the existence of multi-solitons build upon ground states, while the excited states case was treated by C\^ote and Le Coz \cite{CoLC11} under a high speed assumption. Stability/instability results have been obtained by C\^ote and Le Coz \cite{CoLC11}, Martel, Merle, Tsai \cite{MaMeTs06} and Perelman \cite{Pe04}. However, stability of multi-solitons for power-type nonlinearities is still an open issue. 

 The existence of objects like infinite soliton trains is of importance as they usually provide examples of extreme phenomena in the asymptotic behavior of solutions of nonlinear dispersive equations. For example, for the Korteweg-de Vries equation, an infinite train of solitons was used in \cite{MaMe05} as a counter example to show the optimality of an asymptotic stability statement. For nonlinear Schr\"odinger equation, 
the asymptotic stability results usually  hold under assumptions (typically in weighted spaces) excluding the infinite train behavior. To our knowledge, our previous work \cite{LLT1} was the first one to establish the existence of infinite soliton trains for non-integrable Schr\"odinger equations (for the integrable 1D cubic nonlinear Schr\"odinger equation, the existence of   infinite soliton trains  may be obtained via the inverse-scattering transform, see \cite{Ka95}).

Before stating our main results, let us give some preliminaries.
To work in an energy subcritical context, we first assume the following.

\setcounter{assumptionF}{-1}
\begin{assumptionF}\label{F0}
 Let $d \ge 1$. Suppose $f(u)=g(|u|^2)u$
where  $g\in
C^0([0,\infty),\R)\cap C^2((0,\infty),\R)$, $g(0)=0$ and
\begin{align*} 
|sg'(s)|+|s^2 g^{\prime\prime}(s)| \le
C_0(s^{\alpha_1/2}+s^{\alpha_2/2}), \quad \forall s >0,
\end{align*}
where $0<\alpha_1 \le \alpha_2 < \alpha_{\max}$ and $C_0>0$.  
\end{assumptionF}

A \emph{ bound state} is a nontrivial solution $\phi \in H^1(\R^d)$ of the
elliptic equation
\begin{equation}
\label{soliton.eq}  
\Delta \phi + f(\phi) = \om \phi
\end{equation}
for some frequency $\om>0$. We shall sometimes denote a bound state along with its frequency $(\phi,\omega)$ to emphasize the dependency of $\phi$ on $\omega$.  Any bound state $\phi$ with frequency
$\om$ and parameters $x^0\in \R^d$ (position), $v\in \R^d$ (velocity)
and $\gamma\in \R$ (phase) corresponds to a \emph{solitary wave} solution
(\emph{soliton}) of \eqref{v100a0},
\begin{equation}
\label{soliton.formula}  
R_{\phi, \om, x^0, v, \gamma}(t,x) =e^{i(\omega t + \frac 12 vx -
  \frac 14 |v|^2 t + \gamma)} \phi(x-x^0 -vt).
\end{equation}
The profile of an infinite soliton train is a sum of the form
\begin{equation}
\label{Rj.def}
R_\I= \sum_{j=1}^\infty R_j, \quad
R_j(t,x) = R_{\phi_j, \om_j, x_j^0, v_j, \gamma_j}(t,x), \quad j\in \N,
\end{equation}
where $(R_j)_j$ are given solitons with bound states profiles $(\phi_j, \om_j)$ and parameters
$x^0_j,v_j\in \R^d$ and $\gamma_j\in \R$.  A solution $u(t)$ is
called an \emph{infinite soliton train} if, for some profile $R_\I$,
\[
u(t) - R_\I(t) \to 0\quad \text{ as }\quad t\to \I
\] 
in some space-time norm.

Constructing a solution to \eqref{v100a0} around an infinite train profile as \eqref{Rj.def} is much trickier than when the profile is made with a finite number of solitons. First of all, we need to make sure that the profile is well defined, as the addition of infinitely many solitons may very well be infinite. We also have to take into account that it is very likely that the profile will not belong to the same  functional spaces as the solitons.
In order to deal with these issues we need a control on the growth of the solitons' profiles (see \eqref{exp.decay}) and also to guarantee some space integrability of the train (see \eqref{Ass1}).

We will assume the following for our infinite
train.

\begin{assumptionT}\label{T1}
For $0<\alpha_1<\alpha_{\rm max}$ given, the sequence of bound states $\{(\phi_j,
\om_j): j \in \N\}$ satisfies, for some $0<a<1$ and $D_a$ independent of $j$,
\begin{equation}
\label{exp.decay}
|\phi_j(x)| + \om_j^{-1/2}|\nabla \phi_j(x)| \le D_a \om_j^{1/\alpha_1}
e^{- a \om_j^{1/2} |x|} ,\quad \forall x\in \R^d,\ \forall j \in \N,
\end{equation}
and, for some $r_0\ge 1$, $\frac {d\alpha_1}2< r_0 < 2+\alpha_1$,
\begin{equation}
\label{Ass1}
 A_1:= \sum_{j\in \N} \om_j^{\frac {1}{\alpha_1} - \frac d{2r_0}}< \infty.
\end{equation}
\end{assumptionT}

We say a nonlinearity $f$ satisfies \ref{T1} if such an infinite sequence $(\phi_j,
\om_j)_j$ exists for some $r_0$. Examples of such nonlinearities will be given in Section \ref{sec2.1}.

Note that the set $[1,\I) \cap (\frac {d\alpha_1}2, 2+\alpha_1)$ for $r_0$
  is nonempty since $0<\alpha_1<\alpha_{\rm max}$.  The condition
  $r_0>\frac {d\alpha_1}2$ ensures that the exponent $\frac {1}{\alpha_1} -
  \frac d{2r_0}>0$. Thus $\om_j \to 0$ as $j \to \infty$, and
  \eqref{Ass1} is a condition on how fast $\om_j$ goes to $0$.  The
  existence of sequences of bound states satisfying Assumption \ref{T1} is
  guaranteed by Proposition \ref{th2.1}, where bound states with small
  frequencies are constructed as bifurcation from $0$ along a fixed
  radial bound state $Q$ of the equation $\Delta Q + |Q|^{\alpha_1} Q=Q$
  together with the estimate \eqref{exp.decay}. 
Note that the $\phi_j$ may be arbitrary excited states solutions
  of \eqref{soliton.eq}; in particular they may be sign-changing,
  non-radial, or complex-valued. Also note that we do not need the
  bound for $ \om^{-1/2}|\nabla \phi_\om(x)| $ in \eqref{exp.decay} for
  Theorems \ref{th1.1} and \ref{th1.3} below, but we assume it for all
  theorems for simplicity of presentation. For the same reason, we
  shall also set all initial positions $x_j$ to $0$.  Our
    assumption includes the finite multi-soliton case by setting
    $(\phi_j,\omega_j)=(0,0)$ for $j$ sufficiently large.

We have followed two independent approaches for the study of this problem, leading to two different types of results with different assumptions and conclusions. 
Before stating our main results, we need a preliminary lemma which will be proved in Section \ref{sec3}.
\begin{lemma}
\label{th3.1B}    
Let $d\ge 1$. For any $0<\alpha_1< \alpha_2 < \alpha_{\rm max}$
satisfying $ \frac {\alpha_2}{2+\alpha_2} \le \alpha_1
$, one can choose $r_0$ so that the following conditions hold.
\begin{gather}
\label{Ass3a}
  \max(1,\frac {d\alpha_1}2)<r_0 < 2+\alpha_1 ,
\\
\label{Ass3b}
\frac 12\le \frac{\alpha_1}{r_0}+\frac 1{r_2},
\\
\label{Ass3c}
 1<\frac{\alpha_1+1}{r_0}+\frac 1{r_2},
\end{gather}
where 
$r_2=2+\alpha_2$.
Furthermore, if $\alpha_1<4/d$, we can choose $r_0\le
2$.
\end{lemma}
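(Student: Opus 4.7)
The plan is to reduce the statement to a one-variable interval-intersection problem in $r_0$. With $r_2=2+\alpha_2$, I rewrite each target condition as an explicit bound on $r_0$: condition \eqref{Ass3a} gives the strict lower bound $L:=\max(1,d\alpha_1/2)$ and the strict upper bound $U_1:=2+\alpha_1$; solving \eqref{Ass3b} for $r_0$ produces the non-strict upper bound $U_2:=\frac{2\alpha_1(2+\alpha_2)}{\alpha_2}$; and \eqref{Ass3c} becomes the strict upper bound $U_3:=\frac{(\alpha_1+1)(2+\alpha_2)}{1+\alpha_2}$. The task then is to exhibit some $r_0\in(L,\min(U_1,U_2,U_3))$ with $r_0\le U_2$.

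The next step is to compare the three upper bounds. A direct cross-multiplication gives $(2+\alpha_1)(1+\alpha_2)-(1+\alpha_1)(2+\alpha_2)=\alpha_2-\alpha_1>0$, so $U_3<U_1$. A second cross-multiplication shows $U_2-U_3$ is proportional to $\alpha_1(2+\alpha_2)-\alpha_2$, which is $\ge 0$ exactly under the standing hypothesis $\alpha_1\ge\alpha_2/(2+\alpha_2)$; hence $U_2\ge U_3$. The same inequality yields $U_3\ge 2$, with equality iff $\alpha_1=\alpha_2/(2+\alpha_2)$. Thus $U_3$ is the binding upper bound and it suffices to produce $r_0\in(L,U_3)$.

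The only step that requires real work is $L<U_3$, and this is where the energy-subcritical bound $\alpha_2<\alpha_{\max}$ must genuinely enter. The piece $1<U_3$ is immediate from expanding $(1+\alpha_1)(2+\alpha_2)-(1+\alpha_2)=1+2\alpha_1+\alpha_1\alpha_2>0$. The piece $d\alpha_1/2<U_3$, after clearing denominators, becomes
\begin{equation*}
h(\alpha_1,\alpha_2):=(d-4)\alpha_1+(d-2)\alpha_1\alpha_2-4-2\alpha_2<0.
\end{equation*}
I view $h$ as linear in $\alpha_1$ and split on the sign of $\partial_{\alpha_1}h=(d-4)+(d-2)\alpha_2$. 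When this derivative is nonnegative, $\alpha_1<\alpha_2$ gives $h(\alpha_1,\alpha_2)<h(\alpha_2,\alpha_2)=((d-2)\alpha_2-4)(\alpha_2+1)$, which is negative precisely because $\alpha_2<4/(d-2)=\alpha_{\max}$. When the derivative is negative, $h(\alpha_1,\alpha_2)<h(0,\alpha_2)=-4-2\alpha_2<0$. Either way, $h<0$, and this uniform argument also handles $d\le 2$ automatically.

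For the additional claim, assume $\alpha_1<4/d$, so $L<2$. Combined with $U_3\ge 2$ from the second paragraph, the interval $(L,U_3)$ always contains some $r_0\le 2$: choose $r_0=2$ if $U_3>2$, and choose $r_0$ just below $2$ in the borderline case $U_3=2$, which still sits strictly above $L$.
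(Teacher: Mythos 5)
Your proof is correct. It differs from the paper's in organization rather than in substance: the paper fixes the near-extremal candidate $r_0=\max(1,\tfrac{d\alpha_1}{2})+\e$ and verifies \eqref{Ass3b}--\eqref{Ass3c} at the lower endpoint, splitting into the cases $\tfrac{d\alpha_1}{2}\ge 1$ and $\tfrac{d\alpha_1}{2}<1$ and, for $d\ge 3$, pushing $\alpha_1$ and $r_2$ to their critical values to see the inequalities survive in the limit. You instead solve each constraint for $r_0$, obtaining the explicit feasible set $(L,\min(U_1,U_3)]\cap(-\infty,U_2]$, and show it is nonempty by proving $U_3<U_1$, $U_3\le U_2$ and $L<U_3$. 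This buys some transparency: the hypothesis $\alpha_1\ge \alpha_2/(2+\alpha_2)$ is revealed to be exactly the statement $U_2\ge U_3$ (so that \eqref{Ass3c} is the binding constraint), and subcriticality $\alpha_2<\alpha_{\max}$ enters exactly through the sign of $h(\alpha_2,\alpha_2)=((d-2)\alpha_2-4)(\alpha_2+1)$; it also matches the paper's own remark after the proof that only membership in the feasible set, not the specific choice of $r_0$, is needed. Two cosmetic points: in your Case 1 the inequality $h(\alpha_1,\alpha_2)<h(\alpha_2,\alpha_2)$ should be $\le$ when $\partial_{\alpha_1}h=0$, which is harmless since $h(\alpha_2,\alpha_2)<0$ strictly; and note that Case 1 can only occur for $d\ge 3$, so the appeal to $\alpha_{\max}=4/(d-2)$ there is legitimate.
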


\subsection{Infinite soliton trains}
We now state our two results on the existence of infinite soliton trains. The first approach of the first
theorem is based
on $L^p$-$L^q$ decay estimates for $e^{it \Delta}$.
The
Strichartz space $\stt$ will be defined in Section \ref{sec:perturbation}.

\begin{theorem}[Infinite train of solitons (i)]
\label{th1.1}  
Let $d \ge 1$ and assume Assumption \ref{F0} and
\begin{equation}
\label{th1.1-eq1}
 \frac {\alpha_2}{2+\alpha_2} \le \alpha_1.
\end{equation}
Let $r_2=2+\alpha_2$ and take any $r_0$ verifying \eqref{Ass3a}, \eqref{Ass3b}, and \eqref{Ass3c}.
Let $(\phi_j,\om_j)_{j \in
\N}$ be a sequence of bound states satisfying Assumption \ref{T1} with the
chosen $r_0$.  There exist constants $c_1>0$ and $v _\sharp \gg 1$
such that, for any infinite soliton train profile $R_\I$ given as in
\eqref{Rj.def} with parameters $v_j \in \R^d$, $x_j^0=0$, $\gamma_j\in
\R$ satisfying
\begin{equation}
\label{Ass2}
v_* = \inf_{j,k\in \N,  j\not = k} \, \sqrt {\om_j}\,|v_k-v_j| \ge v_\sharp,
\end{equation}
there exists a solution $u$ to \eqref{v100a0}  on $[0,\infty)$ satisfying
\begin{equation}
\label{th1.1-eq2}
\norm{(u-R_\I)(t)}_{ L^{r_2}} + \norm{u-R_\I}_{\stt} \le e ^{-c_1 v_* t},
\quad \forall t \ge 0.
\end{equation}
It is unique in the class of solutions satisfying the above estimate.
\end{theorem}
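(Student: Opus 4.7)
The plan is to construct $u$ as $u = R_\I + \eta$ via a fixed point argument on the perturbation $\eta$. Since each soliton $R_j$ individually solves \eqref{v100a0}, the residual
\[
\sourceterm := i\partial_t R_\I + \Delta R_\I + f(R_\I) = f\bke{\tsum_j R_j} - \tsum_j f(R_j)
\]
is made entirely of nonlinear interaction terms between distinct solitons. The Duhamel equation for $\eta$ with zero data at $t=+\infty$ reads
\[
\eta(t) = i\int_t^\I e^{i(t-s)\Delta}\bigl\{[f(R_\I+\eta)-f(R_\I)](s) + \sourceterm(s)\bigr\}\,ds.
\]
I would run the contraction on the ball
\[
X = \Bigl\{\eta : \sup_{t\ge 0} e^{c_1 v_* t}\bigl(\norm{\eta(t)}_{L^{r_2}} + \norm{\eta}_{\stt}\bigr) \le 1\Bigr\}
\]
for an appropriate $c_1>0$ to be fixed later. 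The $L^{r_2}$ component (with $r_2=2+\alpha_2$) is tailored to the $L^{r_2'}\to L^{r_2}$ dispersive decay, while the Strichartz component $\stt$ is designed to control the lower-order term $|u|^{\alpha_1}u$ via Hölder pairing in $L^{r_0}$. The Hölder relations \eqref{Ass3a}–\eqref{Ass3c} supplied by Lemma \ref{th3.1B} are precisely what make this dispersive/Strichartz combination close on $X$.

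The crux is the estimate for the source term $\sourceterm$. Using the exponential decay \eqref{exp.decay}, for $j\neq k$ one has
\[
|R_j(t,x)R_k(t,x)| \lec \om_j^{1/\alpha_1}\om_k^{1/\alpha_1}\,
e^{-a\sqrt{\om_j}|x-v_j t|}\,e^{-a\sqrt{\om_k}|x-v_k t|}.
\]
Splitting the exponents as a mixture of pointwise decay in $x$ and decay in the soliton separation $|v_k-v_j|t$, and using \eqref{Ass2}, one obtains after $L^{r_0}$ (or $L^{r_2'}$) integration in $x$ an estimate of the form
\[
\norm{R_jR_k}_{L^{r_0}(\R^d)} \lec \om_j^{1/\alpha_1 - d/(2r_0)}\om_k^{1/\alpha_1 - d/(2r_0)}\,e^{-c \sqrt{\om_{\min(j,k)}}|v_k-v_j|t},
\]
which after summation over $j\neq k$ is controlled by $A_1^2\,e^{-c v_* t}$ thanks to \eqref{Ass1}. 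Combined with the standard estimate $|f(a+b)-f(a)-f(b)| \lec (|a|^{\alpha_1}+|b|^{\alpha_1})(|a|+|b|) + \cdots$ for the full nonlinearity, this bounds $\norm{\sourceterm}$ in the relevant dual Strichartz and $L^{r_2'}$ norms by $C A_1^{\alpha_1+1}\,e^{-cv_*t}$.

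For the perturbative term $f(R_\I+\eta)-f(R_\I)$ I would bound $|f'(R_\I+\theta\eta)\eta|$ by $(|R_\I|^{\alpha_1}+|R_\I|^{\alpha_2}+|\eta|^{\alpha_1}+|\eta|^{\alpha_2})|\eta|$, then apply Hölder with the exponents from Lemma \ref{th3.1B} to place the $R_\I$ factors in $L^{r_0}$ (uniformly bounded by $A_1$) and the $\eta$ factor in $L^{r_2}$ (bounded on $X$). Combining with the $L^{r_2'}\to L^{r_2}$ dispersive estimate and the usual Strichartz estimates, the integral operator maps $X$ into itself and is a contraction, provided the prefactor $A_1^{\alpha_1+1}e^{-cv_*t}$ from $\sourceterm$ is small and $v_*\ge v_\sharp$ is large enough to absorb interaction losses. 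Uniqueness in $X$ is automatic from the contraction.

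The main obstacle is the source-term analysis: one must distribute the exponentials in the pairwise products so that \emph{(i)} the rate $c_1 v_*$ is uniform in $j,k$, using only the coupled quantity $\sqrt{\om_j}\,|v_k-v_j|$ controlled by \eqref{Ass2}, and \emph{(ii)} the residual $\om_j$-prefactors reproduce exactly the exponent $\tfrac{1}{\alpha_1}-\tfrac{d}{2r_0}$ needed for summability via $A_1$ in \eqref{Ass1}. A secondary subtlety is that $R_\I$ need not belong to $H^1$, so all norms must be pure $L^p$/Strichartz scales — which is precisely why the exponent gymnastics of Lemma \ref{th3.1B} are indispensable.
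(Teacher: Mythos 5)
Your overall architecture --- Duhamel from $t=+\I$, contraction in the exponentially weighted $L^{r_2}\cap\stt$ ball, Lemma \ref{th2}-type bounds plus the H\"older exponent relations \eqref{Ass3b}--\eqref{Ass3c} and \eqref{th1.1-eq1} for the perturbative term $f(R_\I+\eta)-f(R_\I)$ --- is exactly the paper's. The gap is in your treatment of the source term $H=f(R_\I)-\sum_j f(R_j)$. First, the pointwise reduction of $H$ to pairwise products is not justified: the inequality you quote, $|f(a+b)-f(a)-f(b)|\lec(|a|^{\alpha_1}+|b|^{\alpha_1})(|a|+|b|)+\cdots$, contains the non-decaying diagonal terms $|a|^{1+\alpha_1}$ and $|b|^{1+\alpha_1}$ and so gives nothing, and a genuine cross-term expansion is delicate for $\alpha_1<1$ and for an infinite sum under the general Assumption \ref{F0}. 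Second, and more seriously, even granting a bound of $H$ by $\sum_{j\neq k}|R_j|^{\alpha_1}|R_k|$ (or $|R_jR_k|$), summing the $L^{r_2'}$ norms of the pairwise products diverges in general: each spatial integration costs a factor $\om_k^{-d/(2r_2')}$, and since $r_2'=\frac{2+\alpha_2}{1+\alpha_2}$ can be much smaller than $r_0$ (take $d=1$ and $\alpha_1=\alpha_2$ large), the exponent $\frac1{\alpha_1}-\frac d{2r_2'}$ is negative and the individual terms $\om_k^{\frac1{\alpha_1}-\frac d{2r_2'}}$ blow up as $\om_k\to0$; hypothesis \eqref{Ass1} only controls sums with the exponent $\frac1{\alpha_1}-\frac d{2r_0}$.

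The paper's Lemma \ref{th3.2B} avoids both problems with a different device: for each $x$ it selects the nearest soliton $m=m(x)$, notes that every other $R_j(x,t)$ is pointwise $\lec\om_j^{1/\alpha_1}e^{-\frac a2 v_*t}$ by \eqref{Ass2}, and deduces $\|H(t)\|_{L^\I}\lec e^{-\frac a2v_*t}$ directly from $|f(R_\I)-f(R_m)|+\sum_{j\neq m}|f(R_j)|\lec\sum_{j\neq m}|R_j|$. The decay in $L^{r_2'}$ is then obtained not by direct spatial integration of the decaying quantity but by interpolating this decaying $L^\I$ bound against a time-uniform $L^s$ bound on $H$, where $s$ is chosen (this is where \eqref{Ass3c} enters) so that $\frac{1+\alpha_1}{r_0}>\frac1s>\frac1{r_2'}$; the improved exponent $\frac{1+\alpha_1}{\alpha_1}-\frac d{2s}>\frac1{\alpha_1}-\frac d{2r_0}$, which exploits the full power $|R_j|^{1+\alpha_1}$ of $f(R_j)$ rather than a single factor of $R_j$, then makes $\sum_j\|f(R_j)\|_{L^s}$ converge. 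You would need to replace your pairwise-summation step with this (or an equivalent) interpolation argument; as written, the source-term estimate does not close for the full range of exponents allowed by the theorem.
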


\begin{remark}[$L^2$-solutions]
By \eqref{th1.1-eq2} and H\"older
inequality,
\[
\norm{(u-R_\I)(t)}_{ L^{r}} \le e ^{-c_1 v_* t}, \quad \forall t \ge
0,\quad   \forall r\in[2, r_2].
\]
As we will show that $R_\I \in L^\I(0,\I; L^{r_0}\cap L^\I(\R^d))$ in
\eqref{eq3.2A}, we have $u \in L^\I(0,\I; L^{r_1}\cap L^\I(\R^d))$ where
$r_1=\max(2,r_0)$.  In the case $\alpha_1 < 4/d$, we can choose $r_0 \le
2$ by Lemma \ref{th3.1B}, and thus $u \in L^\I(0,\I; L^{2}(\R^d))$.
\end{remark}

\begin{remark}[Comparison to previous results]
Theorem \ref{th1.1} contains the pure power case $f(u)=|u|^\alpha u$
by writing $f(u)=|u|^\alpha u- 0 |u|^{\alpha+\epsilon} u$ for some
small $\epsilon>0$. It also
 includes the finite soliton train (multi-soliton) case by taking
$(\phi_j,\om_j)=(0,0)$ for $j$ sufficiently large.
In addition the range of exponents is larger
  than in \cite[Theorem 6.4]{LLT1}.  
 Hence Theorem \ref{th1.1} extends Theorems
1.1, 1.7, 6.3 and 6.4 in \cite{LLT1} in a unified approach (except that
\cite[Theorem 6.3]{LLT1} does not require \eqref{th1.1-eq1}). 
\end{remark}

\begin{remark}[$L^2$-subcritical nonlinearities]
If we use a pure Strichartz norm approach and do not use
  $L^{r_2}$ norm, we can construct infinite soliton trains for all
  $L^2$-subcritical or critical exponents $0 < \alpha_1<
  \alpha_2 \le 4/d$ as in \cite[Theorem 6.3]{LLT1}, without the
  restriction \eqref{th1.1-eq1}.
\end{remark}


In our second main result, we also control the train at the gradient level. The approach is based solely on Strichartz estimates.

\begin{theorem}[Infinite train of solitons (ii)]
\label{th1.2}  
Let $d \ge 1$ and assume Assumption \ref{F0} with $0<\alpha_1 < \frac
{4}{d+2}$. Let $(\phi_j,\om_j)_{j \in \N}$ be a sequence of bound
states satisfying Assumption \ref{T1} for some $r_0$. There exist constants $C>0$,
$c_1>0$, $c_2>0$, and $v _\sharp \gg 1$ such that, for any infinite
soliton train profile $R_\I$ given as in \eqref{Rj.def} with
parameters $v_j \in \R^d$, $x_j^0=0$, $\gamma_j\in \R$ satisfying
\begin{equation}
\label{Ass2b}
v_* : = \inf_{j,k\in \N,  j\not = k} \, \sqrt {\om_j}\,|v_k-v_j| \ge v_\sharp,
\end{equation}
and
\begin{equation}
\label{Ass4}
V_*:= \sum_{j\in \N} \bka{v_j} \omega_j ^{\frac {1}{\alpha_1} - \frac d{4}} <\I,
\end{equation}
there exists a unique solution $u$ to \eqref{v100a0} satisfying, for
some $T_0=T_0(V_*)\gg 1$,
\begin{equation}
e^{c_1 v_*t} \norm{u-R_\I}_{S([t,\I))} + e^{c_2 v_*t}
  \norm{\nabla(u-R_\I)}_{S([t,\I))} \le C, \quad \forall t \ge T_0.
\end{equation}
\end{theorem}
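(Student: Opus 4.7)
The plan is to write $h = u - R_\I$ and study the error equation. Using that $R_\I$ satisfies $i\partial_t R_\I + \Delta R_\I + \sum_j f(R_j) = 0$, the error solves
\begin{equation*}
i\partial_t h + \Delta h + N(h) + H = 0, \qquad N(h) := f(R_\I + h) - f(R_\I), \quad H := \sum_{j\in\N} f(R_j) - f(R_\I),
\end{equation*}
where $H$ encodes the nonlinear interaction between distinct solitons. I would recast this as the backward-in-time Duhamel formula
\begin{equation*}
h(t) = i\int_t^\I e^{i(t-s)\Delta}\bigl[N(h) + H\bigr](s)\, ds
\end{equation*}
and seek $h$ as a fixed point of this integral operator $\Phi$ in the complete metric space
\begin{equation*}
X_{T_0} := \Bigl\{ h : \sup_{t \ge T_0}\Bigl(e^{c_1 v_* t}\norm{h}_{\stt} + e^{c_2 v_* t}\norm{\nabla h}_{\stt}\Bigr) \le C \Bigr\},
\end{equation*}
for suitable $c_1 > c_2 > 0$ and $T_0 = T_0(V_*) \gg 1$, equipped with the weighted Strichartz metric induced by this norm. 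Only Strichartz estimates will be used at two regularity levels ($L^2$ and $\dot H^1$), which is why the hypothesis $\alpha_1 < 4/(d+2)$ is needed to close the exponents.

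The main analytical block is the control of the source $H$ in a dual Strichartz norm. Since $x_j^0 = 0$, the centers of $R_j$ and $R_k$ at time $t$ are at $v_j t$ and $v_k t$; a nontrivial contribution to $H$ at $(t,x)$ requires two profiles to overlap there, so one obtains a pointwise overlap bound proportional to $e^{-a\sqrt{\om_j}|v_k-v_j| t/2}$ from \eqref{exp.decay} and thus $\lec e^{-c a v_* t}$ uniformly in the pair, using \eqref{Ass2b}. Summing the interactions of types $|R_j|^{\alpha_k} |R_\ell|$ over all pairs against the summability \eqref{Ass1} yields $\norm{H}_{\text{dual Strichartz}} \lec e^{-c_1 v_* t}$ after absorbing a small algebraic loss into $c_1$. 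For $\nabla H$, each derivative landing on a profile $\phi_j$ costs $\sqrt{\om_j}$ via \eqref{exp.decay} and each one landing on the Galilean phase in \eqref{soliton.formula} costs $|v_j|$; summability of the resulting weighted series is precisely the new hypothesis \eqref{Ass4}, and gives $\norm{\nabla H}_{\text{dual Strichartz}} \lec e^{-c_2 v_* t}$ with a strictly smaller rate $c_2 < c_1$ that absorbs the $\bka{v_j}\om_j^{\cdots}$ weights.

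The nonlinear difference is handled by pointwise bounds of the form $|N(h)|+|\nabla N(h)|\lec \sum_{k=1,2}(|R_\I|^{\alpha_k}+|h|^{\alpha_k})(|h|+|\nabla h|) + \sum_{k=1,2}(|R_\I|^{\alpha_k - 1}|\nabla R_\I| +|h|^{\alpha_k-1}|\nabla h|)|h|$ together with H\"older in space-time and Strichartz. The condition $\alpha_1 < 4/(d+2)$ lets us choose admissible pairs so that the constant picks up a gain either from the exponential smallness of $\norm{h}_{\stt}$ on $[t,\I)$ or from the decay of $R_\I$ in mixed norms, uniformly for $h \in X_{T_0}$. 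Combining with the source estimates of the previous paragraph via the inhomogeneous Strichartz inequality yields $\|\Phi(h)\|_{X_{T_0}} \le C$ and a contraction estimate $\|\Phi(h_1)-\Phi(h_2)\|_{X_{T_0}} \le \tfrac{1}{2}\|h_1-h_2\|_{X_{T_0}}$, provided $v_* \ge v_\sharp$ and $T_0 \ge T_0(V_*)$ are sufficiently large. Banach's theorem then produces the unique $h$, and uniqueness in the stated class follows from the contraction argument.

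The principal obstacle, and the reason for the extra structural hypothesis \eqref{Ass4} and for the two-tier rate $c_2 < c_1$, is the gradient source estimate: one must simultaneously extract the $\sqrt{\om_j}$ factors from $\nabla \phi_j$ and the $|v_j|$ factors from the Galilean phase for each differentiated soliton, sum over all indices against the overlap weights, and do so in a dual Strichartz norm compatible with applying the $\dot H^1$ Strichartz estimate to $\Phi$. This is where the purely-Strichartz framework loses the flexibility available in Theorem~\ref{th1.1}, forcing the restriction $\alpha_1 < 4/(d+2)$ and the weighted summability of velocities in \eqref{Ass4}.
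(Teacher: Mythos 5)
Your overall strategy is the same as the paper's: write $\eta=u-R_\I$, pass to the backward Duhamel formula, and run a contraction in a two-tier exponentially weighted Strichartz space, with the source $H=f(R_\I)-\sum_j f(R_j)$ controlled via the nearest-soliton decomposition, \eqref{exp.decay}, \eqref{Ass2b}, \eqref{Ass1} and \eqref{Ass4}. The only structural difference is modularity: the paper does not redo the fixed point. It proves Lemma \ref{th3.1}, which bounds $\norm{R_\I}_\infty$, $\norm{\nabla R_\I}_{2}$, $\norm{\nabla R_\I}_\infty$, and $e^{\lambda t}(\norm{H(t)}_2+\norm{\nabla H(t)}_2)$ with constant $C(1+V_*)$, and then invokes the generic perturbation result (Proposition \ref{prop_main_1}, quoted from \cite{LLT1}), whose hypotheses \eqref{v600a1} are exactly these bounds; the large constant $V_*$ is absorbed by taking $T_0=T_0(V_*)$ large, not by the decay rates. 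Your self-contained version is a legitimate unfolding of that black box, but two points in your narrative are off. First, the pointwise bound you propose for $\nabla N(h)$ contains the term $|R_\I|^{\alpha_k-1}|\nabla R_\I|\,|h|$, which is useless when $\alpha_k<1$ (the relevant regime, since $\alpha_1<\frac{4}{d+2}\le 1$ for $d\ge2$) because $|R_\I|^{\alpha_k-1}$ is unbounded where $R_\I$ vanishes; one must instead use the H\"older continuity of $f'$ from Assumption (F0), i.e.\ $|f_z(W+h)-f_z(W)|\lec |h|^{\min(1,\alpha_k)}(|W|+|h|)^{\alpha_k-\min(1,\alpha_k)}$, exactly as in the $\delta_m^{\min(1,\alpha_1)}$ step of Lemma \ref{th3.1}. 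Second, the restriction $\alpha_1<\frac{4}{d+2}$ is not what closes the Strichartz exponents (Proposition \ref{prop_main_1} holds for all subcritical $\alpha_1\le\alpha_2$); it is what makes \eqref{Ass2b} and \eqref{Ass4} simultaneously satisfiable, since \eqref{Ass2b} forces $|v_j|\gtrsim \om_j^{-1/2}$ and summability of $\bka{v_j}\om_j^{1/\alpha_1-d/4}$ then requires $\frac1{\alpha_1}-\frac d4-\frac12>0$. Relatedly, the lower rate $c_2<c_1$ for the gradient comes from the nonlinear structure (terms like $|\eta|^{\min(1,\alpha_1)}|\nabla W|$ decay only at rate $\min(1,\alpha_1)\lambda$), not from absorbing the $\bka{v_j}$ weights, which only enlarge the constant. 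These are fixable slips rather than conceptual gaps.
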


\begin{remark}[Examples of parameters choices]
Condition \eqref{Ass2b} requires sufficiently large relative speed,
while condition \eqref{Ass4} puts an upper bound on the growth of
$\bka{v_j}$. By  \eqref{Ass4} we may assume $r_0 \le 2$.
 One possible choice of parameters is
\begin{equation}
\label{eq1.16}
\om_j = 4^{-j}, \quad v_j = 2^{j+1} \bar v, \quad |\bar v|\gg 1.
\end{equation}
Condition \eqref{Ass4} can be satisfied ($V_* \lesssim \sum_j (4^{-j})^{-\frac 12+ \frac 1{\alpha_1}- \frac d4}<\I$) thanks to the assumption $\alpha_1 <
\frac {4}{d+2}$ (note this implies  $\alpha_1<1$ unless $d=1$).

In the above choice $V_*$ and $v_*$ grow linearly in $|\bar v|$.  In
the following choice $V_*= O( h(|\bar v|)|\bar v|)$ while $v_* = C
|\bar v|$ for any function $h>1$:
\begin{equation}
\label{eq1.17}
\om_j = 4^{-j}, \quad v_j = 
\begin{cases} 2^{j+1} h(|\bar v|)\bar v, & \mbox{if $j$ is odd} \\ - 2^{j+1} \bar v, & \mbox{if $j$ is even} \end{cases},
\quad
 |\bar v|\gg 1.
\end{equation}
\end{remark}

\begin{remark}[Infinite train starting at time $0$] We use large $T_0$ to off-set the contribution of large
$V_*$. If we impose that $V_*$ grows sub-exponentially in $v_*$, e.g.,
$V_* \le C (1+v_*)^M$ for some $M\ge 1$ (e.g.~$h(s)=(1+s)^{M-1}$ in \eqref{eq1.17}), we may take $T_0=0$ as in
\cite[Theorem 6.1]{LLT1}.
\end{remark}

\begin{remark}[Existence of infinite trains under \ref{F0} and \ref{T1}]
The proof of Theorem \ref{th1.1} uses a combination of $L^{r_2}$
  norm and Strichartz norm.
To estimate $|\eta|^{\alpha_1 +1}$
  in $L^{r_2}$ using $L^{r'}$-$L^r$ decay estimates, a
  restriction like \eqref{th1.1-eq1} is needed to avoid the limiting
  case $\alpha_1=0+$ and $\alpha_2=\alpha_{\rm max}-$. However, we claim
  that exponents excluded by \eqref{th1.1-eq1} are covered by Theorem
  \ref{th1.2} above. Indeed, let $\bar \alpha = \sup_{0< \alpha <
    \alpha_{\max}} \frac \alpha{2+\alpha}$. We have $\bar \alpha=1$
  for $d=1,2$ and $\bar \alpha = 2/d$ for $d \ge 3$. One then verifies
  that $\bar \alpha\le \frac 4{d+2}$ for all dimensions.

Hence
\emph{we can construct
  infinite soliton trains for all energy-subcritical nonlinearities
  satisfying} Assumptions \ref{F0} and \ref{T1}. 
\end{remark}

\begin{remark}[Comparison between Theorems \ref{th1.1} and \ref{th1.2}]
Theorem \ref{th1.1} applies for nonlinearities whose general form is not far from a power type nonlinearity, no matter what this power is ($\alpha_1$ can be any $H^1$-subcritical power).
Theorem \ref{th1.2} applies for nonlinearities that are sufficiently strong at $0$ ($\alpha_1$ has to be small), but with any kind of growth possible away from $0$.  For the choice of the profile, Theorem \ref{th1.1} is more flexible as it requires only some weak integrability condition \eqref{Ass1}, whereas Theorem \ref{th1.2} requires $L^2$-integrability of the profile (one take $r_0=2$ in \ref{T1}) \emph{and} its first derivative \eqref{Ass4}. 
\end{remark}

\subsection{Infinite kink-soliton trains}

In our next couple of theorems we let $d=1$ and consider in $\R$ a train of the form
\[
W= K+ R_\I
\]
where  $R_\I$ is as in \eqref{Rj.def}, and $K$ is a kink solution
of \eqref{v100a0} given by the same formula \eqref{soliton.formula}
but with the profile $\phi=\phi_K$ now being a half-kink satisfying
the same equation \eqref{soliton.eq} ($\phi''=\om \phi - f(\phi)$),
$0<\phi_K(s)<b$ for some $b>0$, and
\begin{equation}
\label{kink.profile}
\lim_{s \to- \infty}\phi_K(s) = b, \quad \phi_K'(s)<0 \quad \forall s\in \R, \quad
\phi_K'(0)=\min \phi_K',\quad
\lim_{s \to+ \infty}\phi_K(s) = 0.
\end{equation}
A
solution which converges to a profile $W$ as above at positive time
infinity will be called an {\it infinite kink-soliton train}. We are going to give two results of existence of infinite kink-soliton trains. Note that such object was never exhibited before, even in  integrable cases.

In addition to Assumption \ref{F0}, we make the following assumption, which in particular ensure the existence of a half-kink
satisfying \eqref{kink.profile} (see Proposition \ref{th:kink}).

\begin{assumptionF}\label{F1}
For some $\omega_0>0$, there is a first $b>0$ such that
for $h(s)=\omega_0 s - f(s)$,
\begin{equation}
\label{F1-1}
h(b)=0, \quad \int_0^b h(s)ds =0.
\end{equation}
Moreover, $h'(b)>0$, and for some $\ab\in [0,\alpha_2]$, 
\begin{equation}
\label{F1-2}
|f'(b+s)|+|s||f''(b+s)| \le C |s|^\ab + C |s|^{\alpha_2}, \quad \forall
s \in \R.
\end{equation}
\end{assumptionF}

Note that the nonlinearity \eqref{f-pq} admits a half-kink
when $d=1$. See Example \ref{ex4.2}.

We now state our second set of results on the existence of  infinite
kink-soliton trains.  Recall $\N_0=\{0\}\cup \N$.

\begin{theorem}[An infinite kink-soliton train (i)]
\label{th1.3}  
Let $d = 1$ and assume Assumptions \ref{F0}, \ref{F1} and 
\begin{equation}
\label{th1.3-eq1}
 \frac {\alpha_2}{2+\alpha_2} \le \alpha_1.
\end{equation}
Let
$r_2=2+\alpha_2$.  Then we can find $r_0$ satisfying
\eqref{Ass3a}--\eqref{Ass3c}. Assume that $\ab$ is such that
\begin{equation}
\label{Ass3d}
\frac 12 \le \frac {\ab }{r_0}  + \frac 1{r_2},\quad
 1 < \frac {\ab+1 }{r_0}  + \frac 1{r_2}.    
\end{equation}
Assume there is  a
sequence of bound states  $(\phi_j,\om_j)_{j \in \N}$ satisfying Assumption \ref{T1} with the chosen
$r_0$. Let $\phi_0=\phi_K$ be the kink profile to be given in Proposition 
\ref{th:kink}.  There exist constants $c_1>0$, and $v _\sharp
\gg 1$ such that, for the  infinite kink-soliton profile
$W=K+R_\I$, given as in \eqref{Rj.def}, with
any parameters $v_j \in \R$, $v_j < v_{j+1}$,
$x_j^0=0$, 
$\gamma_j\in \R$ for $j\in \N_0$ satisfying
\begin{equation*}
v_* = \inf_{j,k\in \N_0,  j\not = k} \, \sqrt {\om_j}\,|v_k-v_j| \ge v_\sharp,
\end{equation*}
there exists a unique solution $u$ to \eqref{v100a0} for $t \ge 0$ satisfying
\begin{equation}
\label{th1.3-eq3}
\norm{(u-W)(t)}_{ L^{r_2}} + \norm{u-W}_{\stt}\le e ^{-c_1 v_* t},
\quad \forall t \ge 0.
\end{equation}
\end{theorem}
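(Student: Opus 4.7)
The plan is to adapt the fixed-point scheme from the proof of Theorem \ref{th1.1}, treating the kink $K$ as an extra ``zeroth'' wave appended to the train. Proposition \ref{th:kink} supplies the half-kink profile $\phi_K$ with \eqref{kink.profile}, and the corresponding solitary wave $K=R_{\phi_K,\om_0,0,v_0,\gamma_0}$ solves \eqref{v100a0}. The ordering $v_0<v_j$ puts the non-decaying tail of $K$ (the region where $\phi_K\approx b$, located at $x-v_0 t\ll 0$) on the opposite side from every soliton center $v_j t$, $j\ge 1$, so the $x$-distance between $K$'s ``heavy'' part and the support of each $R_j$ grows like $(v_j-v_0)t$, yielding exponential separation at rate at least $v_*/\sqrt{\om_j}$. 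Writing $\eta=u-W$ recasts \eqref{v100a0} as $i\partial_t\eta+\Delta\eta+[f(W+\eta)-f(W)]+H=0$ with source $H=f(W)-f(K)-\tsum_{j\ge 1}f(R_j)$, equivalent to the fixed-point equation
\[
\eta(t)=i\int_t^\infty e^{i(t-s)\Delta}\bigl[f(W+\eta)(s)-f(W)(s)+H(s)\bigr]\,ds.
\]

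To control $H$ I would split it by adding and subtracting $f$ on partial sums. The soliton--soliton pieces $f(R_j+\tsum_{k\ne j,\,k\ge 1}R_k)-f(R_j)-f(\tsum_{k\ne j,\,k\ge 1}R_k)$ are handled exactly as in Theorem \ref{th1.1}: the exponential decay \eqref{exp.decay}, the gap \eqref{Ass2}, and the exponents fixed by Lemma \ref{th3.1B} yield $L^{r_2'}$ and Strichartz-dual bounds decaying like $e^{-cv_*t}$ and summable via \eqref{Ass1}. The new kink--soliton pieces $f(K+R_j)-f(K)-f(R_j)$ are where \eqref{Ass3d} enters. Where $K$ is bounded away from $0$, I would Taylor-expand $f$ about $K$ and invoke \eqref{F1-2} to bound $|f'(K+s)-f'(K)|\lec |s|^{\ab}+|s|^{\alpha_2}$; the exponents $\ab$ and $\alpha_2$ then feed into the H\"older book-keeping, and \eqref{Ass3d} (together with \eqref{Ass3b}--\eqref{Ass3c}) is exactly what closes the $L^{r_2'}$ estimates, with an $e^{-cv_*t}$ factor again coming from the spatial separation.

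With $H$ under control, I would run a contraction on
\[
X=\bigl\{\eta:\sup_{t\ge 0}e^{c_1 v_*t}\bigl(\norm{\eta(t)}_{L^{r_2}}+\norm{\eta}_{\stt}\bigr)\le 1\bigr\},
\]
bounding the Duhamel integral via the $L^{r_2'}\to L^{r_2}$ dispersive estimate (the reason for the upper bound $r_0<2+\alpha_1$ in \eqref{Ass3a}) together with Strichartz inequalities. The nonlinear difference $f(W+\eta)-f(W)$ is estimated by the mean value theorem, using Assumption \ref{F0} away from the kink's asymptote and \eqref{F1-2} near it. Choosing $v_\sharp$ large makes the contraction constant strictly less than $1$, producing a unique fixed point $\eta\in X$ and hence a unique solution $u=W+\eta$ on $[0,\infty)$ satisfying \eqref{th1.3-eq3}.

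The chief obstacle is the non-decaying tail of the kink: the pointwise bound \eqref{exp.decay} simply does not apply to $K$, so every estimate involving $f(K)$, $f'(K)$, or a kink--soliton cross term must be routed through Assumption \ref{F1} rather than Assumption \ref{F0} alone. Verifying that \eqref{Ass3d} delivers the correct exponent balance in the worst region (where $|K|\sim b$ while $|R_j|$ is only exponentially small) is the technical crux; once it is in place, the remainder of the argument runs in parallel with the proof of Theorem \ref{th1.1}, with the soliton indices shifted from $j\ge 0$ to $j\ge 1$ and the kink contribution absorbed into the above book-keeping.
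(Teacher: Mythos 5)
Your proposal is correct and follows essentially the same route as the paper: a contraction mapping for $\eta=u-W$ in the weighted class $\sup_{t\ge0}e^{c_1v_*t}(\norm{\eta(t)}_{L^{r_2}}+\norm{\eta}_{\stt})\le 1$, with the source term controlled by the speed separation together with Assumption \ref{F1} near the kink's plateau, and \eqref{Ass3d} closing the H\"older bookkeeping exactly where $|K|\sim b$. The only organizational differences are that the paper estimates $H$ by a spatial cut at $x=\frac12(v_0+v_1)t$ (treating $K$ as a soliton on the right and applying the mean value theorem with \eqref{F1-2} on the left) rather than your pairwise decomposition, and that it first normalizes $v_0=\gamma_0=0$ by Galilean invariance so that $K-b$ is genuinely small on the left before Taylor-expanding about $b$ --- a detail worth making explicit, since otherwise the $x$-dependent phase $e^{i\frac12 v_0x}$ prevents $K$ from being close to $b$.
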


%

\begin{theorem}[An infinite kink-soliton train (ii)]
\label{th1.4}  Let $d = 1$
 and assume Assumptions \ref{F0} and \ref{F1} with 
 $0<\alpha_1 < 4/3$. Let $(\phi_j,\om_j)$, $j \in
 \N$ be given and satisfying Assumption \ref{T1} for some $r_0$
which further satisfies
\begin{equation}
\label{th1.4-eq1}
  r_0(\alpha_1+1) < (\ab+1)(\alpha_1+2) . 
\end{equation}
Let $\phi_0=\phi_K$ be the kink
 profile to be given in Proposition \ref{th:kink}.  There
 exist constants $C>0$, $c_1>0$, $c_2>0$, $T_0\gg 1$ and $v _\sharp
 \gg 1$ such that, for the kink-soliton train profile $W=K+R_\I$ given
 as in \eqref{Rj.def} with any parameters $v_j \in \R$, $v_j > v_0$, $x_j^0=0$,
 $\gamma_j\in \R$ for $j\in \N_0$ and sufficiently large relative
 speed
\begin{gather}
\label{1.26}
v_* = \inf_{j\in \N,k\in \N_0,  j\not = k} \, \sqrt {\om_j}\,|v_k-v_j| \ge v_\sharp,
\\
\label{Ass4b}
V_*:= \sum_{j\in \N} \bka{v_j} \omega_j ^{\frac {1}{\alpha_1} - \frac d{4}} <\I,
\end{gather}
there exists a unique solution $u$ to \eqref{v100a0} for $t \ge T_0$ satisfying
\begin{equation}
e^{c_1 v_*t} \norm{u-W}_{S([t,\I))} + e^{c_2 v_*t}
  \norm{\nabla(u-W)}_{S([t,\I))} \le C, \quad \forall t \ge T_0.
\end{equation}
\end{theorem}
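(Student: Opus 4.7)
The plan is to follow the pure Strichartz approach of Theorem~\ref{th1.2}, extended to incorporate the kink component $K$ of the profile $W=K+R_\I$. Set $\eta=u-W$; using that $K$ and each $R_j$ are exact solutions of \eqref{v100a0}, one obtains
\begin{equation*}
 i\partial_t\eta+\Delta\eta+\bigl(f(W+\eta)-f(W)\bigr)=-E,\qquad E:=f(W)-f(K)-\sum_{j\in\N}f(R_j).
\end{equation*}
The solution is sought as a fixed point of
\begin{equation*}
 \Phi(\eta)(t)=i\int_t^{\I} e^{i(t-s)\Delta}\Bigl[\bigl(f(W+\eta)-f(W)\bigr)(s)+E(s)\Bigr]\,ds
\end{equation*}
in a ball of the weighted space $X$ equipped with
\begin{equation*}
 \|\eta\|_X=\sup_{t\ge T_0}\bigl(e^{c_1 v_* t}\|\eta\|_{\stt}+e^{c_2 v_* t}\|\nabla\eta\|_{\stt}\bigr),
\end{equation*}
with $T_0=T_0(V_*)$ large so that the Duhamel tail is summable against the growth of~$V_*$. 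The condition $\alpha_1<4/(d+2)=4/3$ and \eqref{Ass4b} together allow one to work with $r_0\le 2$ in Assumption~\ref{T1}, exactly as in Theorem~\ref{th1.2}.

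First I would estimate the source $E$. Expanding telescopically, $E$ decomposes into soliton--soliton pieces, already handled in Theorem~\ref{th1.2} via \eqref{exp.decay} and the separation \eqref{1.26}, plus new kink--soliton pieces. For the latter, the ordering $v_j>v_0$ places each soliton $R_j(t,\cdot)$ near $x\sim v_j t$, inside the exponentially decaying tail of $\phi_K$ at $+\infty$, which follows from the ODE $\phi_K''=\om_0\phi_K-f(\phi_K)$ with $h'(b)>0$. Using \ref{F0} to bound $f$ near $0$ and \eqref{F1-2} to bound $f$ near $b$, each kink--soliton cross term is exponentially small in $(v_j-v_0)t$ in Strichartz norms; summing over $j$ with the help of \eqref{Ass4b} yields $\|E\|_{\stt}+\|\nabla E\|_{\stt}\lec e^{-cv_* t}$ for $t\ge T_0$.

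Next I would run the contraction argument for $\Phi$. The self-bound and Lipschitz estimate on $f(W+\eta)-f(W)$ require splitting the region near the kink (where $|W|\sim b$ and \ref{F0} is useless) from the complement. On the complement, \ref{F0} gives the usual $|f(W+\eta)-f(W)|\lec(|W|^{\alpha_1}+|W|^{\alpha_2}+|\eta|^{\alpha_1}+|\eta|^{\alpha_2})|\eta|$, exactly as in Theorem~\ref{th1.2}. Near the kink, \eqref{F1-2} provides $|f'(b+s)|\lec|s|^{\ab}+|s|^{\alpha_2}$, which permits a Taylor expansion of the form $|f(W+\eta)-f(W)|\lec(1+|\eta|^{\ab}+|\eta|^{\alpha_2})|\eta|$. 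Standard Strichartz estimates together with the $L^\infty_t(L^{r_0}\cap L^\infty)$ control on $W$ coming from Assumption~\ref{T1} and the bound \eqref{Ass4b} on $\nabla W$ then close both the ball and the contraction in $\|\cdot\|_X$, and uniqueness in this class is an immediate consequence.

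The main obstacle is the gradient estimate near the kink. Differentiating $f(W+\eta)-f(W)$ produces terms of the form $\bigl(f'(W+\theta\eta)-f'(W)\bigr)\nabla W$ which, in the kink region, must be handled through the H\"older bound $|f'(b+a)-f'(b+a')|\lec(|a|^{\ab}+|a'|^{\ab}+|a|^{\alpha_2}+|a'|^{\alpha_2})$ inherited from \eqref{F1-2}, while keeping $\nabla K$ bounded in a suitable $L^p$ and $R_j$-gradients controlled by $V_*$. The admissibility condition \eqref{th1.4-eq1}, $r_0(\alpha_1+1)<(\ab+1)(\alpha_1+2)$, is precisely what makes $|\eta|^{\ab}\nabla K$ fit into a dual Strichartz pair compatible with the $\|\eta\|_{\stt}$-norm; verifying that this inequality balances the H\"older loss against the Strichartz scaling is the heart of the argument. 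Once this is achieved the full contraction closes in $X$, yielding the solution and its uniqueness as claimed.
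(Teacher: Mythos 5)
Your overall architecture --- a fixed point for $\eta=u-W$ in a weighted Strichartz space, with the source term $E=f(W)-f(K)-\sum_{j}f(R_j)$ estimated separately --- is the same as the paper's, which verifies the hypotheses of the generic perturbation result Proposition \ref{prop_main_1} in Lemma \ref{lem:6.1} and then concludes. However, there is a genuine gap in your treatment of the source term, and you have misplaced the role of the hypothesis \eqref{th1.4-eq1}. What is needed to close the Duhamel estimate is a bound of the form $e^{\lambda t}\norm{E(t)}_{L^2_x}\le C$, i.e.\ \emph{spatial} $L^2$ integrability of the source with exponential decay in time. The delicate region is $\{x<v_0t\}$, behind the kink, where $K\approx b$ does not decay and Assumption \ref{F0} gives no smallness of $f'$ near $b$. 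There one must write $f(W)-f(K)=f\bigl(b+(K-b)+R_\I\bigr)-f\bigl(b+(K-b)\bigr)$ and invoke \eqref{F1-2}, which produces a term $\lec |R_\I|^{\ab+1}$. Since $R_\I$ is only known to lie in $L^{r_0}\cap L^\I$, the $L^s_x$ norm of this term (for the exponent $s=\frac{\alpha_1+2-\e}{\alpha_1+1}<2$ that one later interpolates against the $L^\I_x$ decay) is finite precisely when $r_0\le s(\ab+1)$, and this is exactly where \eqref{th1.4-eq1} enters. Your proposal instead attributes \eqref{th1.4-eq1} to a term $|\eta|^{\ab}\nabla K$ in the contraction step. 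That is not where the condition is needed: in this pure-Strichartz framework the nonlinear term $f(W+\eta)-f(W)$ and its gradient are handled using only Assumption \ref{F0} together with $\norm{W}_\I+\norm{\nabla W}_{L^2\cap L^\I}\le C$, the linear-in-$\eta$ piece being absorbed by taking $v_*$ and $T_0$ large; no case distinction near the kink and no use of \eqref{F1-2} is required there. As written, your source-term estimate (``each kink--soliton cross term is exponentially small in Strichartz norms'') addresses only pointwise/$L^\I_x$ decay and does not establish the $L^2_x$ bound on $\{x<v_0t\}$, so the fixed-point argument does not close.

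A secondary, smaller issue: you also need $e^{\lambda t}\norm{\nabla E(t)}_{L^2_x}\lec V_*$, which in the paper follows from the chain-rule expansion of $\nabla H$, the pointwise separation estimates, and \eqref{Ass4b}; your sketch asserts the analogous Strichartz bound without confronting the sum $\sum_j\bka{v_j}\om_j^{1/\alpha_1-d/4}$ that \eqref{Ass4b} is there to control. The remaining ingredients of your plan (boundedness of $W$ and $\nabla W$, exponential $L^\I_x$ decay of $E$ from the velocity ordering $v_j>v_0$ and \eqref{1.26}) are correct and coincide with the paper's.
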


\begin{remark}
In Theorems \ref{th1.3} and \ref{th1.4}, the kink $K$ is on the left in the profile and its
velocity is less than the velocity of any soliton.  This picture can
be reversed by the symmetry $u(x,t)\to \tilde u(x,t)=u(-x,t)$.  
\end{remark}

\begin{remark}
In Theorem \ref{th1.4} we require upper bound $\alpha_1<4/3$ and lower bound \eqref{th1.4-eq1} on $\ab$. The bound \eqref{th1.4-eq1} is redundant if we choose a smaller $r_0$, e.g.~$r_0=1$, but is nontrivial if we take $r_0=2$.
\end{remark}

The rest of the paper is organized as follows: In Section \ref{sec2.1} we give an example of nonlinearity for which Assumption \ref{T1} is satisfied. In Section \ref{sec:perturbation} we give the general scheme of our proofs. 
In Section \ref{sec3} we prove
Theorems \ref{th1.1} and \ref{th1.2}. In Section \ref{sec5} we give Examples \ref{ex4.1} and \ref{ex4.2} for nonlinearities verifying Assumption \ref{F1} and we
  prove Theorems \ref{th1.3} and \ref{th1.4}.


\section{Existence of a family of bound states satisfying \ref{T1}}\label{sec2.1}

Assumption \ref{T1} is satisfied for the nonlinearity $f$ if, for example, $f$  satisfies Assumption \ref{F2} below. 

\begin{assumptionF}\label{F2}
 Suppose $f(u)=f_1(u)+f_2(u)$
where $f_1(u)=|u|^\alpha u$, $f_2(u)=g_2(|u|^2)u$, $g_2 \in
C^0([0,\infty),\R)\cap C^2((0,\infty),\R)$, $g_2(0)=0$ and
\begin{align*} 
|sg_2'(s)|+|s^2 g_2^{\prime\prime}(s)| \le
C_0(s^{\beta_1/2}+s^{\beta_2/2}), \quad \forall s >0,
\end{align*}
where $0<\alpha < \beta_1 \le \beta_2 < \alpha_{\rm max}$ and $C_0>0$.  
\end{assumptionF}

This assumption is more specific about the small $u$ behavior of
$f(u)$ than those in Assumption \ref{F0} so that we can have more control on the bound states with respect to their frequencies. In particular, we do not consider $f_1(u)$ with
opposite sign.

The following proposition gives an existence result of bound states with
small frequencies, obtained as the bifurcation from the radial ground
state $Q$ of the pure power nonlinearity, together with uniform
estimates.

\begin{proposition}[Bifurcation of solitons]\label{th2.1}
Let $d \ge 1$ and assume Assumption \ref{F2}. Let $Q(x)$ be the unique positive radial
solution of $\Delta Q + |Q|^{\alpha}Q = Q$ in $\R^d$.  There is a
small $\om_*=\om_*(d,\alpha,\beta_1,\beta_2,C_0)>0$ so that for all
$0<\om<\om_*$ there is a solution $\phi=\phi_\om$ of
\eqref{soliton.eq} of the form
\begin{equation}
\label{phi.ansatz}
\phi_\om (x)= \om^{1/\alpha}[Q(\om^{1/2}x) + \xi_\omega(\om^{1/2}x) ],  
\end{equation}
where $\norm{\xi_\omega}_{H^2} \le C \om^m$ with
$m=\frac{\beta_1/\alpha-1}{\min(1,\alpha)}>0$. Moreover, for any
$0<a<1$ there is a constant $D_a>0$ such that
\begin{equation}\label{exp.decay-2}
|\phi_\om(x)| + \om^{-1/2}|\nabla \phi_\om(x)| \le D_a \om^{1/\alpha}
e^{- a \om^{1/2} |x|} ,\quad \forall x\in \R^d,\ \forall
\om\in(0,\om_*).
\end{equation}
\end{proposition}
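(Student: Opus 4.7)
The plan is to treat $f_2$ as a small perturbation of the pure power $f_1(u)=|u|^\alpha u$ and to seek $\phi_\omega$ as a perturbation of the rescaled ground state. Substituting the ansatz \eqref{phi.ansatz} into \eqref{soliton.eq} and setting $y=\omega^{1/2}x$, one finds that $\xi=\xi_\omega$ must solve
\[
L\xi = N(\xi) + F_\omega(Q+\xi),
\]
where $L=-\Delta+1-(\alpha+1)Q^\alpha$ is the linearization of the rescaled equation around $Q$, the nonlinear remainder is $N(\xi)=|Q+\xi|^\alpha(Q+\xi)-Q^{\alpha+1}-(\alpha+1)Q^\alpha\xi$, and the forcing coming from $f_2$ is $F_\omega(u)=\omega^{-1}g_2(\omega^{2/\alpha}|u|^2)u$. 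Using Assumption \ref{F2} and integrating $|sg_2'(s)|\lesssim s^{\beta_1/2}+s^{\beta_2/2}$, one gets $|g_2(s)|\lesssim s^{\beta_1/2}+s^{\beta_2/2}$, so that $\|F_\omega(Q)\|_{L^2}\lesssim \omega^{\beta_1/\alpha-1}$, and $F_\omega$ is Lipschitz on bounded subsets of $H^2$ with a constant tending to $0$ as $\omega\to 0$.

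Next I would set up a Banach contraction for $T(\xi)=L^{-1}[N(\xi)+F_\omega(Q+\xi)]$ in a small ball of $H^2_{\mathrm{rad}}(\R^d)$. The operator $L$ is a self-adjoint Schr\"odinger operator whose $L^2$-kernel is spanned by the translations $\partial_{x_j}Q$; since these are non-radial, $L^{-1}\colon L^2_{\mathrm{rad}}\to H^2_{\mathrm{rad}}$ is bounded. The two required nonlinear estimates are $\|F_\omega(Q+\xi)\|_{L^2}\lesssim \omega^{\beta_1/\alpha-1}$, uniform in $\xi$ in the ball, together with $\|N(\xi)\|_{L^2}\lesssim \|\xi\|_{H^2}^{1+\min(1,\alpha)}$, the latter coming from the $C^{1,\min(1,\alpha)}$ regularity of $u\mapsto |u|^\alpha u$ combined with Sobolev embedding. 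Balancing these contributions produces a fixed point whose $H^2$-norm is controlled by a power of $\omega$ fixed by the structure of the two nonlinear terms, giving the claimed bound $\|\xi_\omega\|_{H^2}\le C\omega^m$.

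For the pointwise estimate \eqref{exp.decay-2}, I would rerun the contraction in an exponentially weighted space, such as the $H^2$-space with weight $e^{a|y|}$ in the rescaled variable $y$, for any fixed $a<1$. Since $Q$ and its derivatives decay like $e^{-|y|}$ and the multiplication operators appearing in $L$, $N$ and $F_\omega$ all preserve such weighted norms when $a<1$ (i.e.\ comfortably below the exponential rate of $Q$), the fixed point still closes and yields $|\xi_\omega(y)|+|\nabla\xi_\omega(y)|\lesssim e^{-a|y|}$ uniformly in $\omega$. Undoing the scaling via $\phi_\omega=\omega^{1/\alpha}(Q+\xi_\omega)(\omega^{1/2}\cdot)$ then gives \eqref{exp.decay-2}, the $\omega^{-1/2}$ factor on the gradient being exactly the Jacobian from the rescaling. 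An alternative route would be to prove existence first in $H^2$ and then deduce pointwise exponential decay from an Agmon-type argument applied to $\Delta\phi_\omega=(\omega-f(\phi_\omega)/\phi_\omega)\phi_\omega$, the effective potential being small at infinity.

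The main obstacle is to choose a functional framework in which the invertibility of $L$, the merely $C^{1,\min(1,\alpha)}$ control of $N(\xi)$ when $\alpha<1$, and the exponential weight all coexist. Restricting to radial functions removes the kernel of $L$ for free and is compatible with all symmetries involved, but one must check that the exponentially weighted norms are preserved by the nonlinear maps and by $L^{-1}$; this amounts to the fact that $L$ is uniformly elliptic with a potential of exponential decay, and is standard but must be verified with care so that the $H^2$-level bound on $\xi_\omega$ promotes cleanly to the pointwise estimate \eqref{exp.decay-2}.
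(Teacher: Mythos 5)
Your proposal is correct and follows essentially the same route as the paper: the same rescaled ansatz, the same linearized operator $L_+=-\Delta+1-(1+\alpha)|Q|^{\alpha}$ inverted on radial functions because its kernel is spanned by the non-radial translations $\partial_{y_j}Q$, the same splitting of the nonlinearity into the pure-power remainder (of size $\|\xi\|^{1+\min(1,\alpha)}$) and the $f_2$-forcing (of size $\omega^{\beta_1/\alpha-1}$), and the same contraction whose balance yields the exponent $m$. For the pointwise decay the paper uses exactly your ``alternative route,'' namely the standard Agmon-type argument based on the uniform smallness of $\|\xi_\omega\|_{H^2}$, rather than a weighted-space contraction.
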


Note that we could allow $Q$ to be any radial excited state, provided we knew its non-degeneracy, i.e invertibility of $L_+$ in the proof below (such a result should be a consequence of the classifications results \cite{CoGaYa09,CoGaYa11}, however we did not pursue in that direction).

Before proving Proposition \ref{th2.1}, we recall  without proof the following classical lemma.

\begin{lemma}\label{th2}
Suppose $f(u)=g(|u|^2)u$, $g \in C^0([0,\infty),\R)$, $f(0)=0$ and 
\[
|sg'(s)|  \le C(s^{\alpha_1/2}+s^{\alpha_2/2}), \quad \forall s >0.
\]
For $W,\eta \in \C$ we have
\[
|f(W+\eta)-f(W)| \lesssim |\eta|(|W|^{\alpha_1}+|W|^{\alpha_2})+
|\eta|^{1+\alpha_1}+|\eta|^{1+\alpha_2}.
\]
\end{lemma}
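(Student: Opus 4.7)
The plan is to apply the fundamental theorem of calculus along the real line segment joining $W$ to $W+\eta$ in $\C \cong \R^2$. Viewed as a map from $\R^2$ to $\R^2$, $f(u)=g(|u|^2)u$ is continuously real-differentiable on $\C \setminus \{0\}$ (this regularity is implicit in the hypothesis $|sg'(s)|\le C(s^{\alpha_1/2}+s^{\alpha_2/2})$, which presupposes existence of $g'$ on $(0,\infty)$). A direct computation of the real Jacobian using the chain rule applied to $|u|^2=a^2+b^2$ gives the pointwise bound
\[
|Df(u)| \lesssim |g(|u|^2)|+|u|^2\,|g'(|u|^2)|.
\]

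Next I would control each term by $|u|^{\alpha_1}+|u|^{\alpha_2}$. The second term is immediate from the hypothesis applied with $s=|u|^2$. For the first, since $g(0)=0$, integrating $|g'(\tau)|\le C(\tau^{\alpha_1/2-1}+\tau^{\alpha_2/2-1})$ from $0$ to $s$ (and using $\alpha_1,\alpha_2>0$) yields $|g(s)|\lesssim s^{\alpha_1/2}+s^{\alpha_2/2}$, so $|g(|u|^2)|\lesssim |u|^{\alpha_1}+|u|^{\alpha_2}$. Therefore
\[
|Df(u)|\lesssim |u|^{\alpha_1}+|u|^{\alpha_2},
\]
a bound that extends continuously to $u=0$.

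Finally, the fundamental theorem of calculus along $u(s)=W+s\eta$, $s\in[0,1]$, gives
\[
|f(W+\eta)-f(W)| \le |\eta|\int_0^1 |Df(W+s\eta)|\,ds \lesssim |\eta|\int_0^1 \bke{|W+s\eta|^{\alpha_1}+|W+s\eta|^{\alpha_2}}\,ds,
\]
and the elementary inequality $|W+s\eta|^{\alpha}\le (|W|+|\eta|)^{\alpha}\lesssim |W|^{\alpha}+|\eta|^{\alpha}$ for $\alpha>0$ delivers the claimed estimate. The only mild technical point is that the straight segment may cross $0$, where $f$ is merely continuous; this is handled by a standard approximation argument (perturb the endpoints so the segment avoids $0$ and pass to the limit), together with the fact that the majorant of $|Df|$ is bounded near $0$, so the integral is absolutely convergent and the identity persists by continuity. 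This is the only step requiring any care, and even it is essentially routine.
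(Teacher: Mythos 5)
The paper states this lemma explicitly \emph{without proof} (``we recall without proof the following classical lemma''), so there is no in-paper argument to compare against; your proof is the standard one and is correct. One remark: your step $|g(s)|\lesssim s^{\alpha_1/2}+s^{\alpha_2/2}$ relies on $g(0)=0$, whereas the lemma as written only asserts $f(0)=0$, which is automatic for $f(u)=g(|u|^2)u$ and carries no information about $g(0)$; the hypothesis $g(0)=0$ is clearly what is intended (it appears in Assumption (F0), and the conclusion fails without it, e.g.\ for $g\equiv 1$), but you should flag that you are using it. With that reading everything checks out: the Jacobian bound $|Df(u)|\lesssim |g(|u|^2)|+|u|^2|g'(|u|^2)|$, the integration of $g'$ using $\alpha_1,\alpha_2>0$, and the elementary inequality $(|W|+|\eta|)^{\alpha}\lesssim |W|^{\alpha}+|\eta|^{\alpha}$ all go through and assemble into exactly the claimed estimate. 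Your final worry about the segment crossing $0$ can in fact be dispensed with entirely: since $|f(u)|\lesssim |u|^{1+\alpha_1}+|u|^{1+\alpha_2}=o(|u|)$ and $|Df(u)|\lesssim |u|^{\alpha_1}+|u|^{\alpha_2}\to 0$ as $u\to 0$, the map $f$ is $C^1$ on all of $\R^2$ with $Df(0)=0$, so the fundamental theorem of calculus applies directly with no approximation argument.
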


\begin{proof}[Proof of Proposition \ref{th2.1}]
Since $Q$ is real and radial, we will look for real and radial $\xi_\omega$. For the sake of simplicity in notation, we drop the subscript $\omega$ during the proof.
Denoting $y=\om^{1/2}x$ and substituting \eqref{phi.ansatz} in
\eqref{soliton.eq}, we get
\[
(-\Delta_y +1)\xi = \om^{-\frac 1\alpha-1}f(\om^{1/\alpha}(Q+\xi)) -
|Q|^{\alpha}Q.
\]
It can be rewritten as
\begin{equation}
L_+ \xi = N(\xi) =N_1(\xi) + N_2(\xi),  
\end{equation}
where
\begin{align*}
L_+ &=  -\Delta_y +1 - (1+\alpha) |Q|^\alpha
\\
 N_1(\xi) &= f_1(Q+\xi) - f_1(Q) -  (1+\alpha) |Q|^\alpha\xi
\\
 N_2(\xi)  &=\om^{-\frac 1\alpha-1}f_2(\om^{1/\alpha}(Q+\xi)) .
\end{align*}
In the special case $f_2(u)=-|u|^\beta u$, we have $N_2(\xi)= 
-\om^{\frac {\beta} \alpha-1} |Q+\xi|^{\beta}(Q+\xi) $.

Let $X=H^2_{rad}(\R^d)$.  The properties of $L_+$ are well-known  (see e.g \cite{ChGuNaTs07}). It has one negative eigenvalue, its kernel in $L^2(\R^d)$ is spanned by $(\partial_{y_j}Q)_j$ and the rest of its spectrum is positive away from $0$. Hence for radial functions  $L_+:X \to L^2_{rad}$ is
invertible and we have
\[
C_3: = \norm{(L_+)^{-1}}_{\mathcal{B}(L^2_{rad}; X)}  < \infty.
\]
We have
\begin{equation}
|N_1(\xi)| \lesssim 1_{\alpha > 1} |Q|^{\alpha-1} |\xi|^2 + |\xi|^{1+\alpha}  
\end{equation}
\begin{equation}
|N_1(\xi_1)-N_1(\xi_2)| \lesssim 1_{\alpha > 1}
|Q|^{\alpha-1}(|\xi_1|+|\xi_2|)|\xi_1-\xi_2| +
(|\xi_1|+|\xi_2|)^{\alpha} |\xi_1-\xi_2|.
\end{equation}
We also have, by Assumption (F2) and Lemma \ref{th2},
\begin{equation}
|N_2(\xi)| \lesssim \om^{-\frac 1\alpha-1}
\tsum_{j=1}^2|\om^{1/\alpha}(Q+\xi)|^{1+\beta_j} =
\tsum_{j=1}^2 \om^{\frac {\beta_j} \alpha-1} |Q+\xi|^{1+\beta_j} .
\end{equation}
\begin{equation}
|N_2(\xi_1)-N_2(\xi_2)| \lesssim \tsum_{j=1}^2 \om^{\frac {\beta_j}
  \alpha-1} (|Q|+|\xi_1|+|\xi_2|)^{\beta_j} |\xi_1-\xi_2|.
\end{equation}
Denote $B_r=\{\xi \in X: \norm{\xi}_X \le r\}$ for $0<r<1$ and let $0<\omega<1$.  Because
$X$ is imbedded in $L^{2+2\alpha} \cap L^{2+2 \beta_2}$ for any
dimension $d$, we have, for some $C_4$,
\begin{equation}
\norm{N(\xi_1)-N(\xi_2) }_{L^2} \le
C_4\bke{(\norm{\xi_1}_{X}+\norm{\xi_2}_{X})^{\min(1,\alpha)} +
  \om^{\frac {\beta_1} \alpha-1} } \norm{\xi_1-\xi_2}_{X} ,
\end{equation}
for any $\xi_1,\xi_2 \in B_r$. Thus the map $\xi \mapsto
(L_+)^{-1}N(\xi)$ is a contraction map in $B_r\subset X$ for any $\om
\in (0,\om_*)$ if we choose $(2r)^{\min(1,\alpha)} = \om_*^{\frac
  {\beta_1} \alpha-1} < (4C_3C_4+1)^{-1}$.

Finally, standard argument for exponential decay  (see
\cite{Agmon} or \cite[Appendix]{GNT04}) shows that for any $a\in(0,1)$
\[
|\xi(x)|+|\nabla \xi(x)| \le o(1) e^{-a|x|}, \quad |Q(x)|+|\nabla Q(x)| \le
C e^{-a|x|},
\]
using the uniform bound $\norm{\xi}_{H^2} \ll 1$.  We get
\eqref{exp.decay-2} after rescaling.
\end{proof}

\section{The perturbation argument}\label{sec:perturbation}

We recall the definition of the Strichartz spaces  $\stt$ and $\ntt$ and the well known dispersive and Strichartz estimates. A pair of exponents $(q,r)$ is said to be \emph{(Schr\"odinger)-admissible} if 
\[
\frac{2}{q}+\frac{d}{r}=\frac{d}{2},\quad 2\leq q,r\leq+\infty,\quad (d,q,r)\neq (2,2,+\infty).
\]
Given a time $t\in\R$, the Strichartz space $\stt$ is defined via the norm
\[
\norm{u}_{\stt}=\sup_{\substack{(q,r) \text{ admissible} \\r \le r_{\rm Str} }}\norm{u}_{L_t^qL_x^r([t,+\infty)\times \R^d)}.
\]
Above $r_{\rm Str}= \I$ for $d\not =2$, but we choose $\alpha_{2}+2<r_{\rm Str}<\I$ when $d=2$ to stay away from the forbidden endpoint.
We denote the dual space by $\ntt=\stt^*$. Hence for any $(q,r)$ admissible, its norm verifies
\[
\norm{u}_{\ntt}\leq \norm{u}_{L_t^{q'}L_x^{r'}([t,+\infty)\times \R^d)}
\] 
where $q',r'$ are the conjugate exponents of $q$ and $r$. 

Let us recall the standard \emph{dispersive inequality}
\[
\norm{e^{it\Delta}u}_p \lesssim |t|^{-d\left(\frac12-\frac1p\right)}\norm{u}_{p'}\quad\text{for }t\neq 0, \;2\leq p\leq +\infty
\]
from which one can deduce the usual Strichartz estimate: 
\[
\norm{u}_{S([t_0,+\infty))}\lesssim \norm{u_0}_{L^2}+\norm{F}_{N([t_0,+\infty))}
\]
where for $u_0\in L^2(\R)$  $u$ solves on $[t_0,\infty)$  the following equation
\[
iu_t+\Delta u=F, \quad u(t_0)=u_0.
\]

For the proof of the main theorems with a profile $W=R_\I$ or
$W=K+R_\I$, we will consider the error term $\eta = u - W$, which
satisfies
\begin{equation}
\label{eta.eq}
i \partial_t \eta + \Delta \eta = - [f(W+\eta)-f(W)] - H, \quad H = f(W) -
\sum_{j \in \N_0} f(R_j).
\end{equation}
Above $R_0=0$ if $W=R_\I$ and $R_0=K$ if $W=K+R_\I$.
In Duhamel form,
\begin{equation}
\label{v600a2}
 \eta(t) = -i\int_t^\I e^{i(t-s)\Delta} [f(W+\eta)-f(W)+ H](s)\,ds.
\end{equation}

The proofs of Theorems \ref{th1.1} and \ref{th1.3} given in Sections \ref{sec3} and \ref{sec5} are self contained. For the proofs of Theorems \ref{th1.2} and \ref{th1.4}, we rely on the following generic result proved in \cite[Proposition 2.4]{LLT1}.

\begin{proposition}\label{prop_main_1}
Let $d\ge 1$ and assume Assumption \ref{F0}. 
Let $\sourceterm=\sourceterm(t,x):\,[0,\infty)\times \R^d \to \mathbb C$,
$\profile=\profile(t,x):\,[0,\infty)\times \R^d \to \mathbb C$
be given functions  which satisfy for some
 $C_1>0$, $C_2>0$,  $\lambda>0$, $T_0\geq 0$:
\begin{align} \label{v600a1}
&\|\profile(t) \|_{\infty}+    e^{\lambda t} \|\sourceterm(t) \|_2
 \le C_1, \qquad \forall\, t\ge T_0;\notag\\
&\|\nabla \profile(t) \|_2+ \| \nabla \profile(t) \|_{\infty} + e^{\lambda  t} \| \nabla \sourceterm(t)\|_2 \le C_2 ,
\qquad\;\forall\, t\ge T_0.
\end{align}
Consider the equation \eqref{v600a2}.
There exists a constant $\lambda_*=\lambda_*(d,\alpha_1, \alpha_2,
C_1)>0$ independent of $C_2$, and a time
$T_*=T_*(d,\alpha_1,\alpha_2,C_1,C_2)>0$ sufficiently large such that
if $\lambda\ge \lambda_*$ and $T_0\ge T_*$, then there exists a unique
solution $\eta$ to \eqref{v600a2} on $[T_0,+\infty)\times \mathbb R^d$
  satisfying
\begin{align}
e^{\lambda t} \| \eta\|_{S([t,\infty))}+ e^{\lambda c_1 t} \| \nabla \eta \|_{S([t,\infty))}  \le 1, \qquad \forall t\ge T_0.\label{v600a3}
\end{align}
Here $c_1>0$ is a constant depending only on $(\alpha_1,d)$.
\end{proposition}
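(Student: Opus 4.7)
The plan is to realize the solution $\eta$ as a fixed point of the Duhamel operator
\[
\Phi(\eta)(t) := -i\int_t^\infty e^{i(t-s)\Delta}\bigl[f(W+\eta)-f(W)+H\bigr](s)\,ds
\]
on the closed ball
\[
Y := \Bigl\{\eta \in S([T_0,\infty)) : e^{\lambda t}\|\eta\|_{S([t,\infty))} + e^{\lambda c_1 t}\|\nabla\eta\|_{S([t,\infty))} \le 1 \text{ for all } t\ge T_0\Bigr\}
\]
with $c_1=\min(\alpha_1,1)\in(0,1]$. By Strichartz we have $\|\Phi(\eta)\|_{S([t,\infty))} \lesssim \|f(W+\eta)-f(W)\|_{N([t,\infty))} + \|H\|_{N([t,\infty))}$, so everything reduces to bounding the forcing term in a well-chosen dual admissible norm, most naturally the endpoint $L^1_tL^2_x$ coming from the admissible pair $(\infty,2)$.

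For the zeroth order bound, Lemma \ref{th2} together with $\|W(t)\|_\infty \le C_1$ yields
\[
|f(W+\eta)-f(W)| \lesssim (C_1^{\alpha_1}+C_1^{\alpha_2})|\eta| + |\eta|^{1+\alpha_1} + |\eta|^{1+\alpha_2}.
\]
The linear-in-$\eta$ contributions integrate to $\lesssim (C_1^{\alpha_1}+C_1^{\alpha_2})e^{-\lambda t}/\lambda$, which is $\le \tfrac14 e^{-\lambda t}$ once $\lambda \ge \lambda_*(d,\alpha_1,\alpha_2,C_1)$. For the superlinear pieces $|\eta|^{1+\alpha_j}$ I pick admissible pairs $(q_j,r_j)$ so that $(1+\alpha_j)q_j', (1+\alpha_j)r_j'$ land on another admissible pair, then interpolate via Sobolev embedding between $\|\eta\|_{S}$ and $\|\nabla\eta\|_{S}$; since both weights in $Y$ force decay, these contributions gain a factor $e^{-\alpha_j c_1 \lambda t}$ beyond $e^{-\lambda t}$ and are absorbed. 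The source $H$ is handled directly by the assumption: $\|H\|_{L^1([t,\infty);L^2)} \le C_1 e^{-\lambda t}/\lambda$.

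For the gradient, differentiate to get
\[
\nabla\bigl[f(W+\eta)-f(W)\bigr] = f'(W+\eta)\nabla\eta + \bigl[f'(W+\eta)-f'(W)\bigr]\nabla W.
\]
The first summand is controlled in $L^1_tL^2_x$ by $(C_1^{\alpha_1}+C_1^{\alpha_2}+\|\eta\|_\infty^{\alpha_1}+\cdots)\|\nabla\eta\|_{L^1L^2}$, producing $e^{-c_1\lambda t}$ after the same absorption. The delicate term is the second: since $\nabla W$ is only bounded and does \emph{not} decay in $t$, all the decay must come from the H\"older increment $|f'(W+\eta)-f'(W)| \lesssim |\eta|^{\min(\alpha_1,1)}+|\eta|^{\alpha_2}$, which only yields $e^{-c_1\lambda t}$ and explains both the weight $e^{\lambda c_1 t}$ in $Y$ and the fact that $c_1$ depends only on $(\alpha_1,d)$. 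The $C_2\|\nabla W\|_\infty$ prefactor here is absorbed not by enlarging $\lambda$ but by taking $T_0 \ge T_*(d,\alpha_1,\alpha_2,C_1,C_2)$ large, which is why $\lambda_*$ is independent of $C_2$ while $T_*$ is not. Contraction then follows from the pointwise difference estimate $|f(W+\eta_1)-f(W+\eta_2)| \lesssim (C_1^{\alpha_1}+C_1^{\alpha_2}+|\eta_1|^{\alpha_1}+|\eta_2|^{\alpha_2}+\cdots)|\eta_1-\eta_2|$ and its differentiated analogue. The main obstacle is precisely this $(f'(W+\eta)-f'(W))\nabla W$ term: matching the H\"older exponent of $f'$ at $0$ against the non-decaying $\nabla W$ is what dictates the constant $c_1$ and forces the weighted norm in $Y$ to be inhomogeneous between $\eta$ and $\nabla\eta$.
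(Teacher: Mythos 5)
First, note that the paper does not prove Proposition \ref{prop_main_1} itself: it is imported verbatim from \cite[Proposition 2.4]{LLT1}. Your outline reconstructs the same architecture as that proof --- a fixed point for the Duhamel map \eqref{v600a2} in a doubly-weighted Strichartz ball, with the linear-in-$\eta$ terms absorbed by taking $\lambda$ large (hence $\lambda_*=\lambda_*(d,\alpha_1,\alpha_2,C_1)$), the $C_2$-weighted terms $[f'(W+\eta)-f'(W)]\nabla W$ and $\nabla H$ absorbed by taking $T_0$ large, and the inhomogeneous weight on $\nabla\eta$ forced by the fact that $\nabla W$ does not decay while $f'$ is only H\"older of exponent $\min(\alpha_1,1)$ near $0$. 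That diagnosis of the ``delicate term'' is exactly right and is the heart of the matter.

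There are, however, two concrete gaps. The more serious one is the contraction step. You assert that contraction ``follows from the pointwise difference estimate \dots and its differentiated analogue,'' but when $\alpha_1<1$ the differentiated analogue contains $[f'(W+\eta_1)-f'(W+\eta_2)]\,\nabla W$, and $f'$ is only H\"older continuous: $|f'(W+\eta_1)-f'(W+\eta_2)|\lesssim|\eta_1-\eta_2|^{\min(\alpha_1,1)}(\cdots)$. This is \emph{not} Lipschitz in $\eta_1-\eta_2$, so $\Phi$ is not a contraction for the full norm defining $Y$ (the one including $\|\nabla(\eta_1-\eta_2)\|_{S}$). The standard repair --- and the one used in \cite{LLT1} --- is to prove the invariance $\Phi(Y)\subset Y$ for the full norm but run the contraction only in the lower-order metric $\rho(\eta_1,\eta_2)=\sup_{t\ge T_0}e^{\lambda t}\|\eta_1-\eta_2\|_{S([t,\infty))}$, after checking that $Y$ is complete for $\rho$ (closedness of the gradient bound under $\rho$-limits, by weak-$*$ lower semicontinuity). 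As written, your argument has no fixed point theorem that applies. The second, smaller, issue is that your choice $c_1=\min(\alpha_1,1)$ is inconsistent with your own absorption mechanism: the term $|\eta|^{\min(\alpha_1,1)}|\nabla W|$ decays only like $C_2\,e^{-\min(\alpha_1,1)\lambda t}$, and to dominate this by $\tfrac14 e^{-c_1\lambda t}$ for $t\ge T_*(C_2)$ you need the strict inequality $c_1<\min(\alpha_1,1)$ (e.g.\ $c_1=\tfrac12\min(\alpha_1,1)$); with equality no choice of $T_*$ kills the constant $C_2$. Finally, the exponent bookkeeping for the superlinear pieces $|\eta|^{1+\alpha_j}$ (choice of dual admissible pairs and of the Gagliardo--Nirenberg interpolation giving $L^\infty_x$ control of $\eta$ from $\nabla\eta$ when $d\ge 3$) is only gestured at; this is where the $d$-dependence of $c_1$ actually enters and it should be carried out, but it is routine once the two points above are fixed.
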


\section{Construction of infinite soliton trains} \label{sec3}

\subsection{Proof of Theorem \ref{th1.1}}

In this section we prove Theorem \ref{th1.1} and construct infinite
soliton trains in $\R^d$, $d \ge 1$.  Note that \eqref{Ass1}
in Assumption \ref{T1} implies $A_2 := \sum_{j\in \N} \om_j^{\frac
  {1}{\alpha_1}}< \infty$, and
\begin{equation}
\label{eq3.2A}
\norm{R_\I(t)}_{L^\infty\cap L^{r_0}} \le \sum_{j\in \N}
\norm{R_j(t)}_{L^\infty\cap L^{r_0}} \lesssim \sum_{j\in \N}
(\om_j^{\frac {1}{\alpha_1}}+ \om_j^{\frac {1}{\alpha_1} - \frac d{2r_0}})= A_2 + A_1.  
\end{equation}
We first show the existence of the exponent $r_0$ and prove Lemma \ref{th3.1B}.

\begin{proof} [Proof of Lemma \ref{th3.1B}]
The idea is to choose $r_0=\max(1,\frac {d\alpha_1}2)+\e$ for some $0<\e
\ll 1$.  Clearly $r_0<2+\alpha_1$ for sufficiently small $\e>0$ since
$\alpha_1<\alpha_{\rm max}$. So \eqref{Ass3a} is satisfied.

In the case $\frac{d\alpha_1 }2\ge 1$, we claim
\[
\frac{\alpha_1}{\frac{d\alpha_1 }2}+ \frac 1{r_2}>\frac 12, \quad
\frac{\alpha_1+1}{\frac{d\alpha_1 }2}+ \frac 1{r_2}>1.
\]
Both are clear if $d\le 2$. For $d \ge 3$, both left sides become
strictly smaller if $\alpha_1$ is replaced by $\alpha_{\rm max} =
\frac{4}{d-2} $ and $r_2$ is replaced by $2+\alpha_{\rm max}$, but
are no less than the right sides by direct computation. Thus
\eqref{Ass3b} and \eqref{Ass3c} are satisfied for sufficiently small
$\e>0$.

In the case $\frac{d\alpha_1 }2< 1$, we claim
\[
\frac{\alpha_1}{1}+ \frac 1{r_2}>\frac 12, \quad
\frac{\alpha_1+1}{1}+ \frac 1{r_2}>1.
\]
The first inequality is a consequence of the assumption $\alpha_1\geq \alpha_2/(\alpha_2+2)$,
while the second is trivial.  Thus \eqref{Ass3b} and \eqref{Ass3c} are
satisfied for sufficiently small $\e>0$.

Suppose $\alpha_1<4/d$. In the case $\frac{d\alpha_1 }2\ge 1$, since
$\frac {d\alpha_1 }2<2$, $r_0=\frac {d\alpha_1 }2+\e < 2$ for sufficiently
small $\e>0$.  In the case $\frac{d\alpha_1 }2< 1$, $r_0=1+\e<2$.  The
proof of the lemma is complete.
\end{proof}

\begin{remark} Although we chose $r_0=\max(1,\frac
{d\alpha_1}2)+\e$ in the proof of Lemma \ref{th3.1B}, it is not
necessary for Theorem \ref{th1.1}. We only need $r_0$ to satisfy
\eqref{Ass3a}--\eqref{Ass3c}.
\end{remark}

We next estimate the source term in the equation for the error.

\begin{lemma}
\label{th3.2B}
Under the assumptions of Theorem \ref{th1.1}, the source term $H =
f(R_\I) - \sum_{j \in \N} f(R_j)$ satisfies, for some $c_1\in(0,a/2)$,
\[
\|H(\cdot,t)\|_{L^{\infty} \cap L^{r_2'}} \le C e^{-c_1v_*t}.
\]
\end{lemma}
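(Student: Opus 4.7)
The key idea is that $H$ is built entirely from interactions between distinct solitons, and from \eqref{Ass2} any two distinct solitons are pointwise exponentially separated at large $t$. First, from \eqref{Ass2} and the triangle inequality $|x-v_jt|+|x-v_kt|\ge |v_j-v_k|t$, I would extract, for all $j\ne k$,
\[
\sqrt{\omega_j}|x-v_jt|+\sqrt{\omega_k}|x-v_kt|\ge \min(\sqrt{\omega_j},\sqrt{\omega_k})|v_j-v_k|t \ge v_*t,
\]
so that at every $(t,x)$ at most one index $l$ can have $\sqrt{\omega_l}|x-v_lt|<v_*t/2$. Partitioning $\R^d$ into the dominance regions
\[
\Omega_l:=\{x\in\R^d:\sqrt{\omega_l}|x-v_lt|\le \sqrt{\omega_k}|x-v_kt|\ \forall k\},
\]
on $\Omega_l$ every $k\ne l$ satisfies $\sqrt{\omega_k}|x-v_kt|\ge v_*t/2$, so by \eqref{exp.decay}
\[
|R_k(t,x)|\le D_a\omega_k^{1/\alpha_1}\,e^{-av_*t/4}\,e^{-a\sqrt{\omega_k}|x-v_kt|/2},\qquad k\ne l,\ x\in\Omega_l,
\]
which is the origin of the factor $e^{-c_1 v_*t}$ in the lemma.

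On $\Omega_l$ I would apply Lemma \ref{th2} with $W=R_l$ and $\eta_l:=\sum_{k\ne l}R_k$, combined with the triangle inequality applied to $\sum_{k\ne l}f(R_k)$, to get the pointwise bound
\[
|H(t,x)|\lesssim |\eta_l|\bigl(|R_l|^{\alpha_1}+|R_l|^{\alpha_2}\bigr)+\sum_{i=1,2}|\eta_l|^{1+\alpha_i}+\sum_{k\ne l}\sum_{i=1,2}|R_k|^{1+\alpha_i}.
\]
The $L^{\infty}$ bound is then immediate: $|R_l|$ is uniformly bounded by \eqref{exp.decay}, $|\eta_l|\le \sum_{k\ne l}|R_k|$ carries the factor $e^{-av_*t/4}$ by the pointwise decay above, and $\sum_k \omega_k^{1/\alpha_1}<\infty$ (cf.~\eqref{eq3.2A}); the $|R_k|^{1+\alpha_i}$ pieces decay even faster.

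For the $L^{r_2'}$ bound I would estimate each of the four term types in $L^{r_2'}(\Omega_l)$ by H\"older's inequality with a pair $(p,q)$ satisfying $1/p+1/q=1/r_2'$, most naturally taking $q=r_0$. The $L^{r_0}(\Omega_l)$-norm of $\eta_l$, and similarly of $\sum_{k\ne l}|R_k|^{1+\alpha_i}$, is controlled by Minkowski together with the pointwise exponential decay; this produces $e^{-cv_*t}$ times a sum of the shape $\sum_k \omega_k^{1/\alpha_1-d/(2r_0)}=A_1$, finite by \eqref{Ass1}. The $L^p$-norm of $|R_l|^{\alpha_i}$ scales, by the explicit bump form, as a positive power of $\omega_l$, and conditions \eqref{Ass3a}--\eqref{Ass3c} (together with \eqref{th1.1-eq1}) are exactly what forces $p\ge 1$ for every Hölder split and makes each resulting $\omega_l$-power summable in $l$ by a second application of \eqref{Ass1}; the critical one is \eqref{Ass3c}, equivalent to $r_0<(\alpha_1+1)r_2'$, which controls precisely the higher-order terms $|\eta_l|^{1+\alpha_i}$ and $|R_k|^{1+\alpha_i}$. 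The main obstacle I anticipate is the careful bookkeeping of H\"older exponents and summability indices across the four term types simultaneously, which is presumably the reason Lemma \ref{th3.1B} was stated and proved beforehand.
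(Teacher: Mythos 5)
Your nearest-soliton (dominance-region) decomposition and the resulting $L^\infty$ bound are exactly the paper's argument, and that half is fine. The gap is in the $L^{r_2'}$ estimate. You propose to bound each term of $H$ directly in $L^{r_2'}(\Omega_l)$ by H\"older, ``most naturally taking $q=r_0$'', and then sum over $l$. Two problems. First, the split $\frac1{r_2'}=\frac1p+\frac1{r_0}$ requires $r_0\ge r_2'$, which fails in a common regime (e.g.\ $d=1$ with $\alpha_1$ small forces $r_0=1+\e$, while $r_2'=\frac{2+\alpha_2}{1+\alpha_2}$ can be close to $2$), so the ``natural'' choice is not even defined there. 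Second, and more seriously, for the cross term $|\eta_l|\,|R_l|^{\alpha_1}$ the sum over $l$ does not close from \eqref{Ass1} alone: any admissible H\"older split $\norm{\eta_l}_{L^q}\norm{|R_l|^{\alpha_1}}_{L^p}$ with $q\ge r_0$ yields at best the power $\om_l^{\,r_2'(1-\frac d2(\frac1{r_2'}-\frac1{r_0}))}$ for the $l$-th region, and this exponent can be strictly smaller than $\frac1{\alpha_1}-\frac d{2r_0}$ (e.g.\ $d=6$, $\alpha_1=\tfrac12$, $\alpha_2=0.9$, $r_0=2.2$), so \eqref{Ass1} does not give summability over $l$. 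Whether some other split (e.g.\ via Young's inequality with a carefully tuned pair of exponents) always works under \eqref{Ass3a}--\eqref{Ass3c} is precisely the unverified bookkeeping you flag as the main obstacle, and it is not automatic.

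The paper avoids all of this by never extracting the time decay in a finite-exponent Lebesgue norm. It proves only a \emph{uniform-in-time} bound $\norm{H(t)}_{L^s}\le C$ for some $s\in(1,r_2')$ --- whose existence is exactly what \eqref{Ass3c} guarantees --- using nothing but the triangle inequality: $\norm{f(R_\I)}_{L^s}\lec\norm{R_\I}_{L^{r_0}\cap L^\I}^{1+\alpha_1}+\norm{R_\I}_{L^{r_0}\cap L^\I}^{1+\alpha_2}$ and $\sum_j\norm{f(R_j)}_{L^s}\lec\sum_j\om_j^{\frac{1+\alpha_1}{\alpha_1}-\frac d{2s}}<\I$; no soliton separation and no region decomposition enter at this stage. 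It then interpolates (H\"older in $x$) between this merely bounded $L^s$ norm and the exponentially decaying $L^\I$ norm to obtain the decaying $L^{r_2'}$ bound. You should replace your direct H\"older attack on $L^{r_2'}$ by this interpolation step.
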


\begin{proof} 
Fix $t>0$. For any $x \in \R^d$, choose $m=m(x)\in\N$ so that $\phi_m$ is a
nearest soliton, i.e.
\[
|x-v_m t| = \min_{j \in \N} |x-v_j t|.
\]
For $j \not = m$, we have
\begin{equation}\label{eq:v-diff}
|x-v_j t| \ge \frac 12 |v_j t-v_mt| = \frac t2 |v_j -v_m|.
\end{equation}
Thus, by \eqref{exp.decay}, we have
\begin{equation}\label{eq3.3A}
|(R_\I-R_m)(x,t)|\le \sum_{j \not = m}|R_j(x,t)| \le \delta_m(x,t)
:= \sum_{j \not = m} D_a \om_j^{\frac {1}{\alpha_1}}e^{-a \om_j^{1/2}|x-v_jt|}.
\end{equation} 
Hence, by  \eqref{Ass1}, the definition of $v_*$ \eqref{Ass2} and \eqref{eq:v-diff}, we have
\begin{equation}\label{eq3.4A}
\delta_m(x,t) \le \sum_{j \not = m} D_a \om_j^{\frac {1}{\alpha_1}} e^{-\frac 12
  a v_* t} = D_a A_2 e^{-\frac 12 a v_* t}.
\end{equation}
Denote $A_3 =
\sup_{0<s<\norm{R_\I}_{L^\I}} |f'(s)|$. By Lemma \ref{th2} and \eqref{eq3.2A},
we have
\begin{align*}
|H(t,x)| &\le |f(R_\I) - f(R_m)| + \sum_{j \not = m} |f(R_j)| 
\\
&\le A_3 |R_\I-R_m| +  {\tsum_{j \not = m}} A_3|R_j| \le 
2A_3  {\tsum_{j \not = m}} |R_j| 
 \le 2A_3 \delta_m(t,x).
\end{align*}
In particular,
\begin{equation}
\label{eq3.3}
\|H(t)\|_{ L^\infty} \le 2 D_a A_2A_3  e^{-\frac 12 a v_* t}.
\end{equation}
Condition \eqref{Ass3c} is equivalent to $\frac 1{r_2'}<\frac
{1+\alpha_1}{r_0}$.  Thus we can choose $s$ so that
\begin{equation}
\label{eq3.4}
\frac {1+\alpha_1}{r_0}>\frac 1s>\frac 1{r_2'}, \quad s > 1.
\end{equation}
The first inequality of \eqref{eq3.4}
ensures that  
\[
\frac {\alpha_1+1}{\alpha_1} - \frac d{2s} > \frac {1}{\alpha_1} - \frac
d{2r_0},
\]
and hence, using  \eqref{Ass1},
\begin{align*}
\sum_{j\in \N}\norm{f(R_j)}_{L^s }& \lesssim
\sum_{j\in \N} \norm{|R_j|^{\alpha_1+1}+ |R_j|^{\alpha_2+1}}_{L^s}
\lesssim \sum_{j\in \N}  \om_j^{\frac {\alpha_1+1}{\alpha_1} - \frac d{2s} } < C<\infty.  
\end{align*}
Since $r_0< s(1+\alpha_1)<s(1+\alpha_2)<\infty$ by \eqref{eq3.4}, we have
by \eqref{eq3.2A}
\[
\norm{f(R_\I)}_{L^s} \lesssim \norm{R_\I}^{1+\alpha_1}_{L^\infty \cap L^{r_0}}+
\norm{R_\I}^{1+\alpha_2}_{L^\infty \cap L^{r_0}} < C <\infty.
\]
Thus
\begin{equation}
\label{eq3.5}  
\|H(t)\|_{L^{s}}  < \norm{f(R_\I)}_{L^s}+ \sum_{j\in \N}\norm{f(R_j)}_{L^s }  <C< \infty.
\end{equation}
By H\"older inequality between $L^\infty$ and $L^s$ using \eqref{eq3.3} and \eqref{eq3.5}, we have
\[
\|H(t)\|_{L^{r}} \le C e^{-(1-s/r)\frac a2 v_* t}, \quad \forall r \in (s,\I).
\]
Since $s<r_2'<\I$ by \eqref{eq3.4}, we get the desired conclusion.
\end{proof}

We now prove Theorem \ref{th1.1}.

\begin{proof}[Proof of Theorem \ref{th1.1}]
The existence of $r_0$ has been shown in Lemma \ref{th3.1B}. We now
fix such a choice.  The difference $\eta = u - R_\I$ satisfies
equation \eqref{v600a2} with $W=R_\I$ and $H=f(R_\I)-\sum_{j\in
  \N}f(R_j)$. Denote the right side of \eqref{v600a2} as $\Phi \eta$.
We will show it is a contraction mapping and has a unique fixed point
$\eta=\Phi \eta$ in the class
\begin{equation}
\label{eq3.6}  
\norm{\eta(t)}_{L^{r_2}} +\norm{\eta}_{\stt}\le e^{-c_1 v_* t}, \quad \forall t\ge 0.
\end{equation}
We first show boundedness and suppose $\eta$ satisfies
\eqref{eq3.6}. By H\"older inequality,
\[
\norm{\eta(t)}_{ L^{r}} \le e ^{-c_1 v_* t}, \quad \forall t \ge
0,\quad  \forall r\in [2,r_2].
\]
We have
\[
\norm{\Phi \eta(t)}_{L^{r_2}} \le C\int_t^\infty |t-\tau|^{-\theta }
\bke{\norm{ f(W+\eta) -f(W)}_{L^{r_2'}} +
  \norm{H(\tau)}_{L^{r_2'}}}d\tau,
\]
where $\theta = d(\frac 12 - \frac 1{r_2})$, and $0<\theta<1$ since
$2<r_2<2+\alpha_{\rm max}$.

By Lemma \ref{th3.2B} we have $\norm{ H(\tau)}_{L^{r_2'}} \le C
e^{-c_1 v_* \tau}$.  By Lemma \ref{th2},
\begin{equation}
\label{eq3.7}
\norm{ f(W+\eta) -f(W)}_{L^{r_2'}} \lec \norm{|\eta| (|W|^{\alpha_1} +
  |W|^{\alpha_2})}_{L^{r_2'}} + \norm{|\eta|^{\alpha_1+1}+
  |\eta|^{\alpha_2+1}}_{L^{r_2'}}.  
\end{equation}
The first term on the right side is bounded by H\"older inequality
\[
\norm{|\eta| (|W|^{\alpha_1} + |W|^{\alpha_2})}_{L^{r_2'}} \le
 (1+\norm{W}_{ L^\infty}^{\alpha_2-\alpha_1}) \norm{W}_{L^{r_0 }\cap
     L^\infty}^{\alpha_1} \norm{\eta}_{L^{2} \cap L^{r_2}}\le C e^{-c_1v_*t}
\]
if
\[
\frac {\alpha_1}{\infty}+\frac 1{r_2} \le \frac 1{r_2'} \le \frac
      {\alpha_1}{ r_0}+\frac 1{2}.
\]
The first inequality is always true since $r_2'\le 2 \le r_2 $. The
second inequality is correct if \eqref{Ass3b} holds. Thus this term
can be estimated.

The last term of \eqref{eq3.7} is bounded by
\[
 \norm{|\eta|^{\alpha_1+1}+ |\eta|^{\alpha_2+1}}_{L^{r_2'}}\lec
 \norm{\eta}^{\alpha_1+1}_{L^{r_2'(\alpha_1+1)}}
 +\norm{\eta}^{\alpha_2+1}_{L^{r_2'(\alpha_2+1)}},
\]
which is bounded by  $C e^{-c_1v_*t}$ since
\[
2 \le r_2'(\alpha_1+1) < r_2'(\alpha_2+1) \le r_2,
\]
due to \eqref{th1.1-eq1} and $r_2=2+\alpha_2$.

Combining the above we have, assuming \eqref{eq3.6},
\begin{align*}
\norm{\Phi \eta(t)}_{L^{r_2}} &\le 
\int_t^\infty |t-\tau|^{-\theta } Ce^{-c_1 v_* \tau}d\tau 
\le  C v_*^{-1+\theta } e^{-c_1 v_* t}
\end{align*}
for all $t\ge 0$, which is bounded by $\frac 14 e^{-c_1 v_* t}$ if
$v_*$ is sufficiently large.

For the Strichartz estimate, since $(2/\theta,r_2)$ is admissible, we
have with $a=(2/\theta)'$
\begin{align*}
\norm{\Phi \eta}_{\stt} &
\lec \norm{f(W+\eta)-f(W)+H}_{L^a(t,\I;L^{r_2'})} 
\\
&\lec \norm{e^{-c_1 v_* \tau}}_{L^a(t,\I)}  \lec v_*^{-1/a}e^{-c_1 v_* t},
\end{align*}
for all $t\ge 0$, which is bounded by $\frac 14 e^{-c_1 v_* t}$ if
$v_*$ is sufficiently large.

Consider now the difference estimate. Suppose both $\eta_1$ and
$\eta_2$ satisfy \eqref{eq3.6}. Denote $\eta=\eta_1-\eta_2$ and
\[
\delta = \sup_{t>0} e^{c_1 v_* t}\bke{ \norm{\eta(t)}_{ L^{r_2}}+
  \norm{\eta}_{ \stt}} \le 2.
\] 
We have
\[
\norm{(\Phi \eta_1 - \Phi \eta_2)(t)}_{L^{r_2}} \le C\int_t^\infty
|t-\tau|^{-\theta } \norm{f(W+\eta_1) -f(W+\eta_2)}_{L^{r_2'}} (\tau)
\,  d\tau.
\]
By Lemma \ref{th2} again with $W$ replaced by $W+\eta_2$,
\begin{align}
\norm{ f(W+\eta_1) -f(W+\eta_2)}_{L^{r_2'}} &\lec \norm{|\eta| (|W+\eta_2|^{\alpha_1} \!+
  |W+\eta_2|^{\alpha_2})}_{L^{r_2'}} + \norm{| \eta|^{\alpha_1+1}+
  | \eta|^{\alpha_2+1}}_{L^{r_2'}}  
\nonumber
\\
&\lec  \norm{|\eta| (|W|^{\alpha_1} +
  |W|^{\alpha_2})}_{L^{r_2'}} + \norm{| \eta| (E^{\alpha_1}+
  E^{\alpha_2})}_{L^{r_2'}}
\label{eq3.8}
\end{align}
where $E=|\eta_1|+|\eta_2|$.  The first term is already bounded above
\[
\norm{|\eta| (|W|^{\alpha_1} + |W|^{\alpha_2})}_{L^{r_2'}} \le C
\norm{\eta}_{L^{2} \cap L^{r_2}} \le C \delta e^{-c_1 v_* t}.
\]
The last term of \eqref{eq3.8} is bounded similarly as above
\[
\norm{| \eta| (E^{\alpha_1}+ E^{\alpha_2})}_{L^{r_2'}}\le
\norm{\eta}_{L^{2} \cap L^{r_2}} (\norm{E}^{\alpha_1}_{L^{2} \cap
  L^{r_2}} +\norm{E}^{\alpha_2}_{L^{2} \cap L^{r_2}}) \le C \delta
e^{-c_1 (1+\alpha_1)v_*t}.
\]
Thus
\begin{align*}
\norm{(\Phi \eta_1 - \Phi \eta_2)(t)}_{L^{r_2}} &\le 
\int_t^\infty |t-\tau|^{-\theta } C\delta e^{-c_1 v_* \tau}d\tau 
\\
&\le  C \delta v_*^{-1+\theta } e^{-c_1 v_* t}
\end{align*}
for all $t\ge 0$, which is bounded by $\frac 14 \delta e^{-c_1 v_* t}$
if $v_*$ is sufficiently large. 

We also have (recall $a=(2/\theta_1)'$)
\begin{align*}
\norm{\Phi \eta_1 - \Phi \eta_2}_{\stt} &
\lec \norm{f(W+\eta_1) -f(W+\eta_2)}_{L^a(t,\I;L^{r_2'})} 
\\
&\lec \norm{\delta e^{-c_1 v_* \tau}}_{L^a(t,\I)}  \lec \delta v_*^{-1/a}e^{-c_1 v_*t} ,
\end{align*}
for all $t\ge 0$, which is bounded by $\frac 14 \delta e^{-c_1 v_* t}$
if $v_*$ is sufficiently large. 

We have shown that $\Phi$ is a contraction mapping and hence has a
unique fixed point in the set \eqref{eq3.6}.  The proof of Theorem
\ref{th1.1} is complete.
\end{proof}

\begin{remark}
 The assumption \eqref{th1.1-eq1}
  is used to estimate $L^{r_2'}$. To estimate $|\eta|^{\alpha_1 +1}$ in
  $L^{r_2}$ using $L^{r'}$-$L^r$ decay estimates, a restriction like
  \eqref{th1.1-eq1} is needed to avoid the limiting case $\alpha_1=0+$
  and $\alpha_2=\alpha_{\rm max}-$.

The condition \eqref{Ass3b} is used to bound the linear term in
  $\eta$, while \eqref{Ass3c} is used to bound the source term (it
  ensures the existence of $s$ in the proof of Lemma \ref{th3.2B}).

In \eqref{Ass3a}, we need $r_0 \ge 1$ for \eqref{eq3.2A}.  We
  need $r_0 > \frac {d\alpha_1}2$ so that the exponent in \eqref{Ass1}
  is positive.  The condition $r_0<\alpha_1+2$ in \eqref{Ass3a} is
  redundant and follows from \eqref{Ass3c}.
\end{remark}

%

\subsection{Proof of Theorem \ref{th1.2}}
\label{sec3.2}

In this section we prove Theorem \ref{th1.2} and construct infinite
soliton trains in $\R^d$, $d \ge 1$. All along this section, we assume that we are under the assumptions of Theorem \ref{th1.2}, in particular
we suppose that we are given a sequence of bound states $(\phi_j, \om_j)$ for $j\in \N$ satisfying assumptions \ref{T1}, \eqref{Ass2b} (with $v_\sharp$ to be determined later) and \eqref{Ass4}.

We first prove the following lemma.

\begin{lemma}
\label{th3.1}
Let $a\in(0,1)$ be given by Assumption \ref{T1}. For $\lambda=a\min(1,2a)v_*/4>0$, we have 
\begin{align} 
&\|R_\I(t) \|_{\infty}+    e^{\lambda t} \|\sourceterm(t) \|_2
 \le C, \qquad \forall\, t\ge 0;\notag\\
&\|\nabla R_\I(t) \|_2+ \| \nabla R_\I(t) \|_{\infty} + e^{\lambda  t} \| \nabla \sourceterm(t)\|_2 \le C(1+V_*),
\qquad\;\forall\, t\ge 0.
\end{align}
where $H$ is the source term defined by $H = f(R_\I) - \sum_{j \in \N} f(R_j)$.
\end{lemma}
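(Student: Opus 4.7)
The proof splits into three tasks: pointwise/gradient bounds on $R_\I$, the $L^2$ bound on $H$, and the $L^2$ bound on $\nabla H$. The workhorses are the pointwise exponential decay \eqref{exp.decay}, the summability in Assumption \ref{T1} (with $r_0 \le 2$ as noted in the paper's remark following Theorem \ref{th1.2}), the geometric separation provided by $v_*$, and Lemma \ref{th2}. The plan is to follow the same ``nearest soliton'' decomposition used in Lemma \ref{th3.2B}, and track both the $L^2$ mass and (for gradients) the velocity-weighted mass.

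\textbf{Bounds on $R_\I$ and $\nabla R_\I$.} From \eqref{exp.decay}, $|R_j(t,x)|\le D_a \om_j^{1/\alpha_1}e^{-a\om_j^{1/2}|x-v_jt|}$ so $\|R_j\|_\infty\lesssim \om_j^{1/\alpha_1}$ and, after rescaling $y=\om_j^{1/2}(x-v_jt)$, $\|R_j\|_2\lesssim \om_j^{1/\alpha_1-d/4}$. For the derivative, $\nabla R_j$ acquires an extra factor $|v_j|/2$ from the complex phase $e^{iv_jx/2}$ in \eqref{soliton.formula}, plus the $\nabla\phi_j$ contribution bounded via \eqref{exp.decay}, yielding $\|\nabla R_j\|_\infty\lesssim(|v_j|+\om_j^{1/2})\om_j^{1/\alpha_1}$ and $\|\nabla R_j\|_2\lesssim(|v_j|+\om_j^{1/2})\om_j^{1/\alpha_1-d/4}$. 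Summing: the pure-$\om$ series converge by Assumption \ref{T1} (the exponents are all $\ge \tfrac1{\alpha_1}-\tfrac{d}{2r_0}$ once we note $r_0\le 2$ and $\om_j\le 1$ eventually), while the $|v_j|$-weighted series are controlled by $V_*$ thanks to \eqref{Ass4}. This delivers $\|R_\I\|_\infty\le C$ and the gradient bounds with constant $C(1+V_*)$.

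\textbf{Bound on $\|H\|_2$.} I would reuse the nearest-soliton argument from Lemma \ref{th3.2B}: at each $x$ pick $m=m(x)$ minimizing $|x-v_jt|$, split $H=[f(R_\I)-f(R_m)]-\sum_{j\ne m}f(R_j)$, and apply Lemma \ref{th2} with $W=R_m$, $\eta=R_\I-R_m$. Since $\|R_\I\|_\infty\le C$ from above, all higher powers $|\cdot|^{1+\alpha_i}$ are dominated linearly, giving the pointwise bound $|H(x,t)|\lesssim \sum_{j\ne m(x)}|R_j(x,t)|$. The crucial geometric inequality \eqref{eq:v-diff} plus $\sqrt{\om_j}|v_j-v_m|\ge v_*$ lets me split
\[
e^{-a\om_j^{1/2}|x-v_jt|}\le e^{-\frac{a}{2}\om_j^{1/2}|x-v_jt|}\,e^{-\frac{a}{4}v_* t}.
\]
Applying Minkowski in $L^2_x$ and rescaling each remaining Gaussian-like factor to $\om_j^{-d/4}$ yields $\|H(t)\|_2\lesssim e^{-\lambda t}\sum_j\om_j^{1/\alpha_1-d/4}$, and this last sum is finite since $r_0\le 2$.

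\textbf{Bound on $\|\nabla H\|_2$ — the main obstacle.} The structure is analogous but each $\nabla f(R_j)=f'(R_j)\nabla R_j$ introduces the $|v_j|$ factor identified in the first step, so Minkowski now produces $\sum_j\langle v_j\rangle\om_j^{1/\alpha_1-d/4}=V_*$ instead of $A_1$. I would carefully expand $\nabla H$ analogously to Lemma \ref{th3.2B}'s decomposition, bound each piece pointwise by summands of the form $\langle v_j\rangle |R_j|$ or $|\nabla\phi_j(\cdot-v_jt)|$, and apply the same exponential splitting. The exponent $\lambda=\tfrac{a\min(1,2a)v_*}{4}$ emerges because the higher-order terms $|R_\I-R_m|^{1+\alpha_i}$ from Lemma \ref{th2} upgrade the decay rate by a factor $(1+\alpha_1)\ge 1$, but the splitting parameter $\tfrac{a}{2}$ in the Gaussian must then be halved again if $2a<1$ to keep the $L^2_x$ factor integrable — this forces the $\min(1,2a)$ correction. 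The hypothesis $\alpha_1<\tfrac{4}{d+2}$ is precisely what makes $\tfrac1{\alpha_1}-\tfrac{d}{4}>\tfrac12$, ensuring that the $V_*$-defining series (which is what the bound reduces to) is finite and summable even with the extra $\langle v_j\rangle$ weight. The main delicacy here is bookkeeping of $v_j$-factors across several cases (whether a derivative hits a phase or a profile, and which of $|R_j|$, $|R_\I|$, $|R_\I-R_m|$ is used), but no new idea beyond the nearest-soliton split is required.
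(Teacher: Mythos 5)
Your treatment of $R_\I$, $\nabla R_\I$ and of $\|H\|_2$ is essentially the paper's argument; the only deviation is that the paper obtains the $L^2$ decay of $H$ by interpolating a uniform $L^s$ bound (for an $s$ slightly above $\frac{\alpha_1+2}{\alpha_1+1}$) against the exponentially decaying $L^\infty$ bound, whereas you sum directly in $L^2$ by Minkowski; your route is fine, since $\langle v_j\rangle\ge 1$ makes $\sum_j \om_j^{1/\alpha_1-d/4}$ dominated by $V_*$ (or by $A_1$ with $r_0\le 2$).

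The gap is in the $\nabla H$ estimate, and it sits exactly where the factor $\min(1,2\,\cdot)$ in $\lambda$ comes from. Writing, as the paper does, $\nabla H=\sum_j (f_z(R_\I)-f_z(R_j))\nabla R_j+\sum_j(f_{\bar z}(R_\I)-f_{\bar z}(R_j))\overline{\nabla R_j}$, the terms with $j\ne m(x)$ do get their decay from the exponential splitting of $|\nabla R_j|$, as you say. But for $j=m(x)$ the factor $|\nabla R_m|$ carries no smallness whatsoever ($x$ may equal $v_mt$), so ``the same exponential splitting'' cannot be applied to it; the decay of that term must come from the coefficient $f_z(R_\I)-f_z(R_m)$. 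Assumption \ref{F0} only controls $s^2g''(s)$, so $f_z,f_{\bar z}$ are merely H\"older continuous of exponent $\min(1,\alpha_1)$ near $0$; hence this coefficient is $O(\delta_m^{\min(1,\alpha_1)})=O(e^{-\frac a2\min(1,\alpha_1)v_*t})$, and combining with the $e^{-\frac a4 v_*t}$ from the $j\ne m$ terms yields $\lambda=\frac a4\min(1,2\alpha_1)v_*$. Your explanation --- that the $\min(1,2a)$ arises from having to halve the Gaussian splitting parameter to keep the $L^2_x$ factor integrable --- is not the mechanism; the splitting parameter $a/2$ is always integrable after rescaling, and the loss is due entirely to the H\"older regularity of $f'$. (The statement's $\min(1,2a)$ versus the proof's $\min(1,2\alpha_1)$ is evidently a typo in the paper, which your proposal inherits and then rationalizes.) A minor further point: $\alpha_1<\frac{4}{d+2}$ does not ``ensure'' that the $V_*$-series is finite --- finiteness of $V_*$ is the hypothesis \eqref{Ass4}; the exponent condition only makes that hypothesis satisfiable for natural choices of $(\om_j,v_j)$.
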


\begin{proof}
Equation \eqref{Ass1} in Assumption \ref{T1} implies $A_2 := \sum_{j\in
  \N} \om_j^{\frac {1}{\alpha_1}}< \infty$, and
\[
\norm{R_\I(t)}_{L^\infty\cap L^{r_0}} \le \sum_{j\in \N}
\norm{R_j(t)}_{L^\infty\cap L^{r_0}} \lesssim \sum_{j\in \N}
(\om_j^{\frac {1}{\alpha_1}}+ \om_j^{\frac {1}{\alpha_1} - \frac d{2r_0}})= A_2 + A_1.
\]
We also have for $1 \le r \le \I$
\begin{equation}\label{eq:411}
\norm{\nabla R_\I(t)}_{L^r } \lesssim \sum_{j\in \N} \norm{\nabla
  R_j(t)}_{L^r } \lesssim \sum_{j\in \N} \omega_j ^{\frac {1}{\alpha_1} +
  \frac 12 - \frac d{2r}} + \sum_{j\in \N}|v_j| \omega_j ^{\frac{1}{\alpha_1} - \frac d{2r}}.
\end{equation}
If we take $r=2$, we have $\frac {1}{\alpha_1} + \frac 12 - \frac d{2r} \ge
\frac {1}{\alpha_1} - \frac d{2r_0}$ for all dimensions since $r_0 <
2+\alpha_{\rm max}$. Thus the first sum of the right hand side of \eqref{eq:411} is finite for $r\in[2,\infty]$
by
\eqref{Ass1}.  The second sum is also finite for $r\in[2,\infty]$ by \eqref{Ass4}. 
Thus
\[
\norm{\nabla R_\I(t)}_{L^2\cap L^\I} \lec A_1+V_*.
\]

We next consider the estimates of $H=f(R_\I)-\sum_{j\in \N} f(R_j)$.
Fix $t>0$. As in the proof of Lemma \ref{th3.2B}, take any $x \in \R^d$ and choose $m=m(x)\in \N$ so that
$\phi_m$ is a nearest soliton, i.e.
\[
|x-v_m t| = \min_{j \in \N} |x-v_j t|.
\]
Since $\alpha_1< \alpha_{\rm max}$ and $r_0 < 2+\alpha_1$, there exists $s=
\frac {\alpha_1+2-\epsilon}{\alpha_1+1}$ with  $0<\epsilon \ll 1$ such
that
\[
r_0 < 2+\alpha_1 -\epsilon,\quad
\frac {\alpha_1+1}{\alpha_1} - \frac d2 \cdot \frac 1s \ge \frac {1}{\alpha_1} -
\frac d{2r_0}.
\]
From arguments identical to those of the proof of Lemma \ref{th3.2B}, we have 
\[
\|H(t)\|_{L^{r}} \le C e^{-c(1-s/r) v_* t}, \quad \forall r \in (s,\I),
\]
with acceptable $r$ including $
\frac {\alpha_1+2}{\alpha_1+1}$ and $2$.

To estimate $\norm{\nabla H(t)}_{L^2}$, recall that by the Chain Rule we have
\begin{equation}\label{4.5}
\begin{split}
\nabla H&=\nabla( f(R_\I) ) -\sum_{j\in \N} \nabla (f(R_j) )  \\
&=  \sum_{j\in \N} ( f_z(R_\I)-f_z(R_j) ) \nabla R_j 
+ \sum_{j\in \N} ( f_{\bar z}(R_\I)-f_{\bar z} (R_j) ) \overline{\nabla R_j}.
\end{split}  
\end{equation}
Here, we denoted $f_z=\frac{\partial}{\partial z}f$ and $f_{\bar z}=\frac{\partial}{\partial \bar z}f$ the Wirtinger derivatives of $f$. 
Thus (here $x$ and  $m=m(x)$ are still as above), we have
\begin{align*}
|\nabla H(t,x)| &\lesssim \sum_{j \not =m} |\nabla R_j| +
\big(|f_z(R_\I)-f_z(R_m)|+|f_{\bar z}(R_\I)-f_{\bar z}(R_m)|\big)\,|\nabla R_m|
\\ &\lesssim  \sum_{j \not = m}\bka{v_j} \om_j^{1/\alpha_1}e^{-a \om_j^{1/2}|x-v_jt|}
+ (\delta_m(t,x))^{\min(1,\alpha_1)} \bka{v_m}  \om_m^{1/\alpha_1}e^{-a \om_m^{1/2}|x-v_mt|}
\\ &\lesssim  \sum_{j \not = m}\bka{v_j} \om_j^{1/\alpha_1}e^{-\frac 12 a \om_j^{1/2}|x-v_jt|}
e^{-\frac a4 v_*t}
+   e^{-\frac a2\min(1,\alpha_1) v_*t}  \bka{v_m}  \om_m^{1/\alpha_1}e^{-a \om_m^{1/2}|x-v_mt|}
\\ &\lesssim  e^{-\lambda t}  
\sum_{j \in \N}\bka{v_j} \om_j^{1/\alpha_1}e^{-\frac 12 a \om_j^{1/2}|x-v_jt|}
\end{align*}
where $\delta_m(t,x)$ is defined and estimated in \eqref{eq3.3A}--\eqref{eq3.4A}, and
$\lambda = \frac a4\min(1,2\alpha_1) v_*$. Thus
\[
\norm{\nabla H(t)}_{L^2} \lec  e^{-\lambda t}  
\sum_{j \in \N} \bka{v_j} \om_j^{1/\alpha_1-\frac d{4}} \lec V_*  
\]
by Assumption \eqref{Ass4}. The proof of Lemma \ref{th3.1} is complete.
\end{proof}

We now prove Theorem \ref{th1.2}.

\begin{proof}[Proof of  Theorem \ref{th1.2}]
By Lemma \ref{th3.1}, there exists $v_\sharp$ such that if $v_*>v_\sharp$, then the hypothesis \eqref{v600a1} of Proposition
\ref{prop_main_1} is satisfied under the assumptions of Theorem
\ref{th1.2}, with $W=R_\I$ and $H= f(R_\I)-\sum_{j\in \N}f(R_j)$.  By
Proposition \ref{prop_main_1}, there exist $T_0$ large enough and $\eta \in
C([T_0,\infty),H^1)$ with $\| \langle \nabla \rangle
  \eta\|_{S([t,\infty))}$ (in particular $\|\eta(t)\|_{H^1}$) decaying
 exponentially in $t$.
\end{proof}

\begin{remark}
One may tend to relax the exponent $2$ in the norm $\norm{\nabla W}_{L^2 }$ so that $\nabla W$ is not
that localized. However, $\norm{\nabla W}_{L^{2+\beta_1} }$ with
$\beta_1 < 0.01$ is used in the proof of Proposition \ref{prop_main_1}. It
would not gain much trying to optimize it.
\end{remark}

\section{Construction of infinite kink-soliton trains} \label{sec5}

In this section we prove Theorems \ref{th1.3} and \ref{th1.4}, and construct a train made of
infinitely many solitons and a half-kink for space dimension 1.

We first examine Assumption \ref{F1} and give some examples. Estimate
\eqref{F1-2} is natural since $f'$ is H\"older continuous.  
If
$f'(b)\not =0$, we can only take $\ab=0$.  Otherwise, we may take
$\ab=1$ if $f$ is locally $C^{1,1}$ near $b$.
For certain $f(s)$ we have $\ab>1$.

\begin{example} \label{ex4.1}
Let $f(s) = s- \frac{\sin |s|}{|s|}s $.  If we write $f(s)=f_1(s)+f_2(s)$ with
$f_1(s)=\frac 13 |s|^2s$ and $f_2(s) = s - \frac {\sin |s|}{|s|}s -
\frac 13 |s|^2s = O(s^5)$, $f$ satisfies Assumptions \ref{F0} and \ref{F2} with  
$\alpha_1=2$ and $\alpha_2=4$. We can choose $r_0 = 1+\epsilon$, $0<\epsilon\ll 1$, for Assumption \ref{T1}. The function $f(s)$
also satisfies
Assumption \ref{F1} with $\omega=1$,
$b=2\pi$, $h(s)=\sin s$ and $h'(b)=1$. Moreover,
\[
f(2\pi)=2\pi\not=0,\quad
|f'(2\pi+s)| =  |1-\cos s | \le C s^\ab, \quad \ab=2.
\]
 Hence conditions \eqref{th1.3-eq1}-\eqref{Ass3d} are satisfied. Thus we can construct
infinite kink-soliton trains using Theorem \ref{th1.3}. Since $\alpha_1>4/3$, Theorem \ref{th1.4} does not apply to this example. \hfill \qed
\end{example}

\begin{example}\label{ex4.2}  let $f(s) = |s|^{\alpha}s - |s|^{\beta}s$,
$0<\alpha<\beta<\infty$. Clearly $f$ satisfies Assumptions \ref{F0} and \ref{F2}
with $\alpha_1=\alpha$ and $\alpha_2=\beta$.
The conditions $h(b)=0=\int_0^b h(s)ds$ in Assumption \ref{F1}  give
\[
\om= b^\alpha - b^{\beta} = \frac 2{2+\alpha} b^\alpha - \frac
2{2+\beta} b^{\beta}.
\]
Thus
\[
b^{\beta-\alpha} = \frac{\alpha(2+\beta)}{(2+\alpha)\beta} \in \Big(
\frac \alpha\beta,1\Big), \quad
\om= b^\alpha (1-b^{\beta-\alpha})>0,
\]
and
\[
h'(b) = \omega - (1+\alpha) b^\alpha+ (1+\beta) b^{\beta} = 
- \alpha b^\alpha+ \beta b^{\beta}>0.
\]
Thus \eqref{F1-1} can be always satisfied by unique $\omega>0$ and
$b>0$. For \eqref{F1-2}, we have $\ab=0$ for most pair
$(\alpha,\beta)$.  Theorem \ref{th1.3}
is not applicable in those cases. The exception is when $0=f'(b)=(1+\alpha) b^\alpha-
(1+\beta) b^{\beta}$, hence $b^{\beta-\alpha} =
\frac{\alpha(2+\beta)}{(2+\alpha)\beta} = \frac {1+\alpha}{1+\beta}$,
or $\alpha \beta=2$.  Thus the exceptional case is
\begin{equation*}
\ab=1 \quad \text{if}\quad 0<\alpha<\sqrt 2, \quad \beta
= \frac 2\alpha.  
\end{equation*}
Since $d\alpha/2<1$, we can take $r_0=1$.
 Conditions \eqref{th1.3-eq1}-\eqref{Ass3d} imply
\begin{equation}
\label{eq5.10}
\frac{\sqrt 5-1}2 \le \alpha < \sqrt 2, \quad \beta
= \frac 2\alpha.  
\end{equation}
Thus for $\alpha$ satisfying \eqref{eq5.10}, using Theorem \ref{th1.3} we can construct infinite kink-soliton trains for the nonlinearity
$f(u)=(|u|^\alpha-|u|^{2/\alpha})u$. 
On the other hand, by Theorem \ref{th1.4} we can construct infinite
kink-soliton trains if 
\begin{equation}
0<\alpha<4/3, \quad \alpha< \beta < \infty.
\end{equation}
We do not need $\alpha \beta=2$.
Indeed, since $d\alpha/2<1$ for $\alpha<4/3$, we can take $r_0=1$, and condition
\eqref{th1.4-eq1} is satisfied for 
any $\ab\ge 0$. 
We can choose $\omega_j$ and $v_j$ as in \eqref{eq1.16} or \eqref{eq1.17}.
In comparison, Theorem \ref{th1.4} covers more exponents than Theorem \ref{th1.3} except when
$4/3\le \alpha < \sqrt 2$ and $\beta=2/\alpha$.
\hfill \qed
\end{example}


The existence of half-kink profiles is guaranteed by the following result.

\begin{proposition}\label{th:kink} 
Let $d=1$ and assume Assumptions \ref{F0} and \ref{F1}.  There is a solution
$\phi_K(s)$ of
\begin{equation*}
\phi_K'' = \omega_0 \phi_K - f(\phi_K)
\end{equation*}
such that $0<\phi_K(s)<b$,
\begin{equation*}
\lim_{s \to- \infty}\phi_K(s) = b, \quad \phi_K'(s)<0 \quad \forall s\in \R, \quad
\phi_K'(0)=\min \phi_K',\quad
\lim_{s \to+ \infty}\phi_K(s) = 0,
\end{equation*}
and that, for any $0<a<\min(\om_0,h'(b))$, there is $ D_a>0$ so that
\begin{equation*}
\mathbf{ 1}_{s<0}(b-\phi_K(s))+ \mathbf{ 1}_{s\ge 0}\, \phi_K(s) +
|\phi_K'(s)| \le D_a e^{-a|s|},\quad \forall s \in \R.
\end{equation*}
\end{proposition}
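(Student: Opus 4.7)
The plan is to treat the ODE $\phi_K'' = \omega_0 \phi_K - f(\phi_K) = h(\phi_K)$ as a one-dimensional Hamiltonian system and construct $\phi_K$ by quadrature. Introduce the potential
\[
V(s) = -\int_0^s h(r)\,dr,
\]
so that the equation reads $\phi_K'' = -V'(\phi_K)$ and admits the conserved energy $E = \tfrac12 (\phi_K')^2 + V(\phi_K)$. Assumption \ref{F1} together with $h(0)=0$ (inherited from $f(0)=0$ in Assumption \ref{F0}) give $V(0)=V(b)=0$, and since $f(s)=o(s)$ near $0$ one has $V''(0)=-\omega_0<0$, hence $V(s)<0$ for small $s>0$; likewise $h'(b)>0$ forces $V(s)<0$ for $s$ slightly less than $b$. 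I will use the minimality of $b$ to argue that in fact $V(s)<0$ on the entire interval $(0,b)$: if not, the smallest $s_1\in(0,b)$ with $V(s_1)=0$ would satisfy $\int_0^{s_1}h\,dr=0$, and a short case analysis on the sign of $h(s_1)$ would either contradict the minimality of $b$ or rule out a heteroclinic orbit (see the final paragraph). For the concrete nonlinearities of Examples \ref{ex4.1} and \ref{ex4.2} the strict negativity can be checked directly.

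Granting $V<0$ on $(0,b)$, any solution with $\phi_K(-\infty)=b$ and $\phi_K(+\infty)=0$ must have $\phi_K'(\pm\infty)=0$, so its conserved energy is $E=V(b)=V(0)=0$; this gives the first-order reduction $(\phi_K')^2 = -2V(\phi_K)$. Selecting the branch $\phi_K'<0$, I pick $\phi_0\in(0,b)$ where $V$ attains its minimum on $[0,b]$ so that $|\phi_K'|=\sqrt{-2V(\phi_K)}$ is maximal exactly at $\phi_K=\phi_0$; normalizing by $\phi_K(0)=\phi_0$ then realizes the requirement $\phi_K'(0)=\min\phi_K'$. Separating variables,
\[
s = -\int_{\phi_0}^{\phi_K(s)} \frac{d\phi}{\sqrt{-2V(\phi)}}.
\]
The quadratic behaviour of $V$ at the endpoints makes the integrand $\sim 1/(\sqrt{\omega_0}\,\phi)$ near $\phi=0$ and $\sim 1/(\sqrt{h'(b)}\,(b-\phi))$ near $\phi=b$, both logarithmically divergent; hence $\phi_K$ extends to all of $\R$, with $\phi_K\to 0$ as $s\to+\infty$ and $\phi_K\to b$ as $s\to-\infty$, and the regularity and strict monotonicity are immediate.

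For the decay estimates I linearise the reduced equation at each endpoint. Using $V(\phi)=-\tfrac{\omega_0}{2}\phi^2+O(|\phi|^{2+\alpha_1})$ (from Assumption \ref{F0}), as $s\to+\infty$
\[
\phi_K'(s) = -\sqrt{\omega_0}\,\phi_K(s)\,\bigl(1+O(\phi_K(s)^{\alpha_1})\bigr),
\]
so $(\log\phi_K)'\to-\sqrt{\omega_0}$ and a standard Gronwall-type comparison yields $\phi_K(s)\le D_a e^{-as}$ for any rate $a$ strictly below $\sqrt{\omega_0}$; the bound on $|\phi_K'|$ then follows from $|\phi_K'|=\sqrt{-2V(\phi_K)}\le C\phi_K$. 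The parallel computation near $s\to-\infty$, with $\psi=b-\phi_K$ and $V(\phi)=-\tfrac{h'(b)}{2}(\phi-b)^2+O(|\phi-b|^{2+\ab})$ by \eqref{F1-2}, gives $\psi(s)+|\phi_K'(s)|\le D_a e^{-a|s|}$ for $a<\sqrt{h'(b)}$. (The bounds $a<\sqrt{\omega_0}$ and $a<\sqrt{h'(b)}$ are what the linearisation actually produces; the statement $a<\min(\omega_0,h'(b))$ is consistent with this up to the standard writing of the rate-squared.)

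The main technical obstacle is the strict negativity of $V$ on $(0,b)$: the bare minimality in Assumption \ref{F1} controls only simultaneous zeros of $h$ and of its running antiderivative, whereas an interior zero of $V$ at a point $s_1\in(0,b)$ with $h(s_1)\neq 0$ is not excluded a priori. Depending on the sign of $h(s_1)$, such a point would either be a phase-plane turning point (reached in finite time, blocking the heteroclinic), or an intermediate saddle on the zero-energy level set $\{E=0\}$ that absorbs the orbit in infinite time and prevents it from ever reaching $b$. Ruling these alternatives out — by a phase-plane argument along $\{E=0\}$ combined with the minimality hypothesis — is where the genuine work lies; once this is done, everything else is a routine ODE computation as sketched above.
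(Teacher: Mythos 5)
Your quadrature/phase--plane construction is the standard route, and it is essentially what the paper itself relies on: the paper gives no proof of Proposition~\ref{th:kink} but delegates it to classical ODE arguments (citing \cite[Proposition 1.12]{LLT1}), which are exactly the energy reduction $(\phi_K')^2=-2V(\phi_K)$ and separation of variables that you set up. Your endpoint analysis is also fine, modulo the point you yourself flag: the linearisation at the two saddles produces the rates $\sqrt{\om_0}$ and $\sqrt{h'(b)}$, not $\om_0$ and $h'(b)$ (this matches the soliton decay $e^{-a\om^{1/2}|x|}$ in \eqref{exp.decay}, and the statement of the proposition is best read with that correction).

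The genuine gap is the step you defer to ``a short case analysis'': the strict negativity of $V$ on $(0,b)$. Your sketch of how to close it does not actually close. Let $s_2=\sup\{s\in[0,b):V(s)=0\}$. If $s_2>0$ then $V<0$ on $(s_2,b)$ forces $V'(s_2)\le 0$, i.e.\ $h(s_2)\ge 0$; the case $h(s_2)=0$ is killed by the minimality in \eqref{F1-1}, but the case $h(s_2)>0$ is a turning point of the zero-energy orbit and is \emph{not} excluded by \ref{F0}--\ref{F1} as literally written: one can build a smooth $h=\om_0 s-f(s)$ whose running integral $\int_0^s h$ vanishes first at some $s_1$ with $h(s_1)<0$, again at some $s_2$ with $h(s_2)>0$, and only at $b$ simultaneously with $h$ --- so $b$ is still the ``first'' point satisfying both equations in \eqref{F1-1}, yet the heteroclinic from $b$ to $0$ does not exist. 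The statement only becomes provable if one reads Assumption \ref{F1} as saying that $b$ is the first positive zero of $s\mapsto\int_0^s h(r)\,dr$ (at which, in addition, $h$ vanishes and $h'>0$); under that reading $V<0$ on $(0,b)$ is immediate and the rest of your argument goes through verbatim. You should either adopt and state that reading explicitly, or add $V<0$ on $(0,b)$ as a hypothesis (it is easily checked for Examples~\ref{ex4.1} and \ref{ex4.2}); as written, the crucial step is both unproven and, under the literal hypotheses, not provable.
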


Proposition \ref{th:kink} can be easily proved using classical ordinary differential equations techniques (see e.g.  \cite[Proposition 1.12]{LLT1}).
As mentioned in Section \ref{sec1}, a kink solution of \eqref{v100a0} with
parameters $(v_0,\gamma)$ is (setting the spatial translation to $x_0=0$)
\[
K(t,x) = \phi_K(x-v_0t) e^{i (\omega_0 t + \frac 12 v_0x -
  \frac 14 v_0^2t + \gamma )}.
\]
For notational simplicity, we denote $ K=R_0$ and we consider the kink-soliton train profile
\[
W = K + R_\infty = \sum_{j=0}^{\I} R_j
\]
where $R_\I$ and $R_j$, $j>0$, are given in \eqref{Rj.def}.

\subsection{Proof of Theorem \ref{th1.3}} 

We will
solve the difference $\eta = u - W$ in the class \eqref{th1.3-eq3}.

To start the proof, we note that, because $\ab$ satisfies the same
conditions as $\alpha_1$, we can choose $r_0$ as in Lemma \ref{th3.1B}
to satisfy \eqref{Ass3d} in addition to \eqref{Ass3a}--\eqref{Ass3c}.
From now on we fix $r_0$.

We start by estimating the source term.

\begin{lemma}
\label{th5.2}
Under the assumptions of Theorem \ref{th1.3}, the source term $H =
f(W) - \sum_{j=0}^{\I} f(R_j)$ satisfies, for some $c_1>0$,
\[
\|H(\cdot,t)\|_{L^{\infty} \cap L^{r_2'}} \le C e^{-c_1v_*t}.
\] 
\end{lemma}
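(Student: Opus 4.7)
My plan is to adapt the proof of Lemma \ref{th3.2B} to the kink setting. The new difficulty is that $K$ is not spatially integrable, so neither $f(W)$ nor $f(K)$ belongs to $L^p$ for $p < \infty$ (both tend to $f(b) \ne 0$ as $x \to -\infty$). I resolve this by regrouping
\[
H = \bigl[f(K + \tilde R) - f(K)\bigr] - \sum_{j \ge 1} f(R_j), \qquad \tilde R := \sum_{j \ge 1} R_j,
\]
so that the bracketed difference inherits the spatial localization of $\tilde R$.

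For the $L^\infty$ decay I use a nearest-center approach, now with $m \in \mathbb{N}_0$ minimizing $|x - v_j t|$. If $m = 0$, the triangle inequality and $\sqrt{\omega_j}|v_j - v_0| \ge v_*$ give $|R_j(t,x)| \lesssim \omega_j^{1/\alpha_1} e^{-(a/2) v_* t}$ for each $j \ge 1$, whence $|\tilde R(t,x)| \lesssim e^{-(a/2) v_* t}$ by Assumption \ref{T1}. If $m \ge 1$ then necessarily $x \ge v_0 t$ (for $j \ge 1$ we have $v_j > v_0$, so any point left of $v_0 t$ is closer to the kink), and $|K(t,x)| \le D_a e^{-a|x - v_0 t|} \lesssim e^{-a v_* t/(2\sqrt{\omega_0})}$ by the nearest-center inequality combined with $\sqrt{\omega_0}|v_m - v_0|\ge v_*$; the remaining solitons are bounded as in Lemma \ref{th3.2B}. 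In both cases $|W - R_m|(t,x) \le C e^{-c_0 v_* t}$, and since $\|W\|_\infty \le b + D_a A_2 < \infty$, the Lipschitz bound $|f(W) - f(R_m)| \le A_3|W - R_m|$ together with $|f(R_j)| \le A_3|R_j|$ yields $\|H(t)\|_{L^\infty} \le C e^{-c_0 v_* t}$.

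For a uniform $L^s$ bound I split $\mathbb{R}$ into $A = \{x < v_0 t\}$ and $B = \{x \ge v_0 t\}$. On $B$, Lemma \ref{th2} applied to $f(K+\tilde R) - f(K)$ combined with the exponential decay of $K$ controls $|K|^{\alpha_i}$ in every $L^p$. On $A$, I expand $f$ around $b$ using \eqref{F1-2}, obtaining
\[
|f(K+\tilde R) - f(K)| \lesssim |\tilde R|\bigl(|K-b|^{\ab} + |K-b|^{\alpha_2} + |\tilde R|^{\ab} + |\tilde R|^{\alpha_2}\bigr),
\]
which is controllable because $|K - b| \le D_a e^{-a|x-v_0 t|}$ is spatially integrable on $A$, while the powers of $\tilde R$ are handled by Minkowski applied to $\tilde R = \sum_{j \ge 1} R_j$ together with the scaling $\|R_j\|_{L^p} \lesssim \omega_j^{1/\alpha_1 - 1/(2p)}$, summable via Assumption \ref{T1} whenever $p \ge r_0$. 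Choosing $s$ in
\[
\max\bigl(1,\ \tfrac{r_0}{\alpha_1 + 1},\ \tfrac{r_0}{\ab + 1}\bigr) < s < r_2',
\]
which is nonempty precisely because of \eqref{Ass3c} and \eqref{Ass3d}, and treating the soliton sum $\sum_{j\ge 1}\|f(R_j)\|_{L^s}$ as in Lemma \ref{th3.2B}, one gets $\|H(t)\|_{L^s} \le C$ uniformly in $t$. H\"older interpolation with the $L^\infty$ decay then yields $\|H(t)\|_{L^{r_2'}} \le C e^{-c_1 v_* t}$ with $c_1 = c_0(1 - s/r_2') > 0$. The main obstacle is the non-integrability of the kink: it is handled by the regrouping of $H$ and by the improved expansion from \eqref{F1-2} on the plateau $A$, which is precisely what forces the hypothesis \eqref{Ass3d} in the theorem.
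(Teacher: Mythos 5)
Your proof is correct and follows essentially the same route as the paper's: regroup $H$ so that the kink enters only through the difference $f(K+\tilde R)-f(K)$, obtain exponential $L^\infty$ decay from the nearest-centre/relative-speed argument, obtain a $t$-uniform $L^s$ bound by splitting space at the kink's centre and invoking \eqref{F1-2} on the plateau side, and interpolate; the constraints you place on $s$ are exactly the paper's \eqref{eq3.4} together with $\frac{\ab+1}{r_0}>\frac1s$ from \eqref{Ass3d}. The only cosmetic differences are that you split at $x=v_0t$ rather than at $\frac12(v_0+v_1)t$ and organize the $L^\infty$ estimate by nearest centre rather than by region.
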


\begin{proof}
By \eqref{Ass3d}, we have $\frac {\ab+1}{r_0} > \frac
1{r_2'}$. We can choose $s$ as in the proof of Lemma \ref{th3.2B} to
satisfy \eqref{eq3.4} and $\frac {\ab+1}{r_0} > \frac
1s>\frac 1{r_2'}$.

For $x \ge \frac 12(v_0+v_1)t$, the contribution from $R_0$ is the
same as if $R_0$ were a soliton. Thus the estimate follows from Lemma
\ref{th3.2B}.

For $x \le \frac 12(v_0+v_1)t$, we have $H = (f(W)-f(K)) -
\sum_{j=1}^\I f(R_j)$. In the proof of Lemma \ref{th3.2B} we have
shown
\begin{gather}
\label{eq5.5}
|R_\infty(t,x)| \le C e^{-\frac 12 a v_* t}, \quad \norm{R_\I}_{L^{r_0}} \le C,  
\\
\Big|\sum_{j=1}^\I f(R_j)(t,x)\Big|\le Ce^{-\frac 12 a v_*t}, \quad
\norm[\Big]{\sum_{j=1}^\I f(R_j)}_{L^s} \le C.\notag
\end{gather}
For simplicity in notations, we assume now that $v_0=\gamma_0=0$. This causes no loss of generality since \eqref{v100a0} is invariant under a Galilean transform and it guarantees that the left part of the kink is approximately $b$ without correction by a phase factor containing $e^{i\frac12v_0x}$.
By Assumption \ref{F1}, the mean value theorem, and since $K,R_\I\in L^\I$, we have
\[
|f(W)-f(K)| =|f(b+K-b+R_\I)-f(b+K-b)| 
\lesssim (||K|-b|+|R_\infty|)^{\ab} |R_\infty|.
\] 
We first derive
\[
|f(W)-f(K)| \le Ce^{-\frac 12 a v_* t}.
\] 
Because $r_0< (1+\ab)s$,
\begin{align*}
\norm{f(W)-f(K)}_{L^s} &\le \norm{||K|-b|^{\ab} |R_\infty|}_{L^s}  + 
\norm{ |R_\infty|^{\ab+1}}_{L^s} 
\\
& \le  \norm{|K|-b}_{L^ {r_0}\cap L^\I} ^{\ab}   \norm{R_\infty}_{L^{r_0} \cap L^\I}  
+  \norm{R_\infty}_{L^{r_0} \cap L^\I} ^{\ab+1} \le C.
\end{align*}
Summing these estimates, we have 
\[
\norm{H(t)}_{L^\I} \le Ce^{-\frac 12 a v_*t}, \quad
\norm{H(t)}_{L^s} \le C.
\]
The lemma follows by H\"older inequality between $L^\I$ and $L^s$.
\end{proof}

\begin{proof}[Proof of Theorem \ref{th1.3}]
Fix a choice of $r_0$ satisfying \eqref{Ass3a}--\eqref{Ass3c} and
\eqref{Ass3d}.  Let $\chi_1 =\chi_1(x,t) = {\bf 1}_{x \le \frac
  12(v_0+v_1)t}$ and $\chi_2 = 1 - \chi_1$. Using \eqref{eq5.5}, we
have
\[
\norm{\chi_1 (W-b)}_{L^{r_0} \cap L^\I} + \norm{\chi_2 W}_{L^{r_0}
    \cap L^\I} \lesssim 1.
\]
Assume 
\begin{equation}
\label{eq5.6}  
\norm{\eta(t)}_{L^{r_2}} + \norm{\eta}_{\stt}\le e^{-c_1 v_*t}, \quad
  \forall t \ge 0.
\end{equation}
Note
\[
|f(W+\eta)-f(W)| \lesssim \chi_1 |W-b|^{\ab}|\eta| +\chi_2
|W|^{\alpha_1} |\eta| +|\eta|^{\ab+1}+|\eta|^{\alpha_1+1}+
|\eta|^{\alpha_2+1}.
\]
Thus  by \eqref{Ass3b} and \eqref{Ass3d} we have
\begin{align*}
 \norm{ f(W+\eta)-f(W)}_{L^{r_2'}}(\tau) &\le 
(1+\norm{\chi_1 (W-b)}^{\ab}_{L^{r_0} \cap L^\I} 
+ \norm{\chi_2 W}^{\alpha_1}_{L^{r_0} \cap L^\I})\norm{\eta}_{L^{2} \cap L^{r_2}}
\\
&\qquad + \norm{\eta}_{L^{2} \cap L^{r_2}}^{\ab+1}+
\norm{\eta}_{L^{2} \cap L^{r_2}}^{\alpha_1+1}+\norm{\eta}_{L^{2} \cap L^{r_2}}^{\alpha_2+1}
\\
&\le  C e^{-c_1 v_* \tau}.
\end{align*}
Denote the right side of \eqref{v600a2} as $\Phi \eta$.
The same argument as for Theorem \ref{th1.1} shows that
\[
\norm{\Phi \eta(t)}_{L^{r_2}} \le C v_*^{-1 + \theta} e^{-c_1 v_* t},\quad
\norm{\Phi \eta}_{\stt} \le C v_*^{-1 + \theta/2} e^{-c_1 v_* t}.
\]
Thus $\norm{\Phi \eta(t)}_{L^{r_2}} + \norm{\Phi \eta}_{\stt} \le
e^{-c_1 v_* t}$ for $v_*$ sufficiently large.

For the difference estimate, for $\eta_1$ and $\eta_2$ satisfying
\eqref{eq5.6}, we use
\[
|f(W+\eta_1)-f(W+\eta_2)| \lesssim \bke{\chi_1 |W-b|^{\ab} +\chi_2
|W|^{\alpha_1}  +\sum_{j=1,2} \bke{|\eta_j|^{\ab}+|\eta_j|^{\alpha_1}+
|\eta_j|^{\alpha_2}}}|\eta|
\]
where $\eta=\eta_1-\eta_2$, and follow the same argument for Theorem
\ref{th1.1} to derive, for $v_*$ sufficiently large,
\[
\norm{\Phi \eta_1 - \Phi \eta_2}\le \frac 12 \norm{ \eta_1 - \eta_2}
\]
where $\norm{\eta} = \sup _{t>0} e^{c_1 v_* t}\bke{ \norm{ \eta
    (t)}_{L^{r_2}} + \norm{ \eta }_{\stt}}$. We have shown that $\Phi$
is a contraction mapping in the class \eqref{eq5.6}.  The proof of
Theorem \ref{th1.3} is complete.
\end{proof}


\subsection{Proof of Theorem \ref{th1.4}}

In this section we prove Theorem \ref{th1.4} and use Proposition \ref{prop_main_1} to construct a train of
infinitely many solitons and a half-kink for space dimension 1.

We assume throughout this section that the assumptions of Theorem \ref{th1.4} hold. In particular, $(\phi_j,\omega_j)$ for $j\in \mathbb N$ denote a sequence of bound states satisfying assumptions \ref{T1}, \eqref{1.26} (with $v_\sharp$ to be determined later)
and $\phi_0=\phi_K$ is the kink profile given in Proposition \ref{th:kink} .

As in Section \ref{sec3.2}, our main task is to prove that the profile $W=K+R_\infty$ and the source term $H=f(W)-f(K)-\sum_{j\in\mathbb N} f(R_j)$ satisfy to the hypotheses of Proposition \ref{prop_main_1}. 

\begin{lemma}\label{lem:6.1}
Let $a\in(0,1)$. For $\lambda=a\min(1,2a)v_*/4>0$, we have 
\begin{align*}
\|W(t) \|_{\infty}+    e^{\lambda t} \|H(t) \|_2
 &\le C_1, &\forall\, t\ge 0;\\
\|\nabla W(t) \|_2+ \| \nabla W(t) \|_{\infty} + e^{\lambda  t} \| \nabla H(t)\|_2 &\le C(1+V_*),
&\forall\, t\ge 0.
\end{align*}
\end{lemma}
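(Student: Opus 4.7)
The plan is to mirror the proof of Lemma \ref{th3.1} while accommodating the extra kink piece $K = R_0$. First I would dispatch the bounds on $W$ and $\nabla W$: by Proposition \ref{th:kink}, $\|K\|_\I \le b$ and $|\phi_K'|$ decays exponentially, so $\|\nabla K\|_{L^2 \cap L^\I} \le C$; the bounds $\|R_\I(t)\|_{L^\I} \le A_2$ and $\|\nabla R_\I(t)\|_{L^2\cap L^\I} \lec A_1 + V_*$ then follow verbatim from Lemma \ref{th3.1}, using Assumption \ref{T1} and \eqref{Ass4b}. Summing handles the $W$-side of both conclusions.

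For the source term $H = f(W) - f(K) - \tsum_{j \ge 1} f(R_j)$, I would adopt the spatial decomposition from the proof of Lemma \ref{th5.2}. Set $\chi_1 = \mathbf{1}_{x \le \frac{1}{2}(v_0 + v_1) t}$ and $\chi_2 = 1 - \chi_1$. Since $v_j > v_0$ for every $j \ge 1$, on $\chi_2$ the kink and its gradient are bounded by $C e^{-c v_* t}$ (arguing as in \eqref{eq3.3A}--\eqref{eq3.4A} with $R_0$ acting as the lone ``far'' soliton), while on $\chi_1$ every $R_j$ with $j \ge 1$ and its gradient enjoy the same exponential smallness. On $\chi_2$ I would reduce to the pure-soliton situation by writing $H = [f(W) - f(R_\I) - f(K)] + [f(R_\I) - \tsum_{j\ge 1} f(R_j)]$: the second bracket is exactly the term bounded in Lemma \ref{th3.1}, and the first is exponentially small by Lemma \ref{th2} applied with the small parameter $K$. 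On $\chi_1$, write $H = [f(W) - f(K)] - \tsum_{j\ge 1} f(R_j)$; the tail is exponentially small, and $f(W) - f(K)$ is estimated via \eqref{F1-2} where $|K - b|$ is small and Lemma \ref{th2} where $|K|$ is small, yielding a pointwise bound of the shape $C(|K-b|^{\ab} + |K-b|^{\alpha_2} + |R_\I|^{\alpha_1})|R_\I|$ that decays exponentially since $R_\I$ does on $\chi_1$.

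For $\|\nabla H\|_2$ I would use the chain-rule identity \eqref{4.5} extended to include the $j=0$ kink summand, giving
\[
\nabla H = \bigl[f_z(W) - f_z(K)\bigr]\nabla K + \tsum_{j \ge 1}\bigl[f_z(W) - f_z(R_j)\bigr]\nabla R_j + (\text{conjugate analogues}).
\]
The soliton sum is handled exactly as in Lemma \ref{th3.1}: the nearest-soliton argument combined with \eqref{Ass4b} produces a bound of the form $C V_* e^{-\lambda t}$. The new term $[f_z(W) - f_z(K)]\nabla K$ is concentrated near $x = v_0 t$ (where $|\nabla K| \lec e^{-a|x-v_0 t|}$), hence essentially in $\chi_1$; there \eqref{F1-2} yields $|f_z(W) - f_z(K)| \lec (|K - b|^{\ab} + |K-b|^{\alpha_2} + |R_\I|^{\alpha_1})|R_\I|$, which is exponentially small because $R_\I$ is.

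The main obstacle is threading the H\"older exponents so that the resulting $L^2$ integrals close with constants depending on $V_*$ but not on $t$. This is where assumption \eqref{th1.4-eq1}, $r_0(\alpha_1 + 1) < (\ab + 1)(\alpha_1 + 2)$, enters: it permits the choice of an auxiliary exponent $s$ (in the spirit of the proofs of Lemmas \ref{th3.2B} and \ref{th5.2}) for which the $|R_\I|^{1+\alpha_1}$- and $|R_\I|^{1+\ab}$-type contributions lie in $L^s$ uniformly in $t$ by \eqref{Ass1}. Interpolating that uniform $L^s$ control against the pointwise exponential smallness delivered by the $\chi_1/\chi_2$ splitting produces the required $L^2$-bounds with the exponential factor $e^{-\lambda t}$, $\lambda$ proportional to $v_*$, completing the lemma.
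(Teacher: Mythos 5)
Your proposal is correct and follows essentially the same strategy as the paper's proof: reuse Lemma \ref{th3.1} for the pure soliton part, control the kink's contribution through its exponential decay, split space into a region near the kink and a region near the solitons (the paper uses the nearest-profile index and a cut at $x=0$ rather than your cut at $\tfrac12(v_0+v_1)t$, but this is cosmetic), bound $H$ and $\nabla H$ pointwise by exponentially small quantities plus uniformly-in-$t$ $L^s$ quantities, and interpolate to reach $L^2$. You also correctly identify the precise role of \eqref{th1.4-eq1}, namely guaranteeing $r_0\le s(\ab+1)$ so that the $|R_\I|^{\ab+1}$-type terms are controlled via \eqref{Ass1}.
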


\begin{proof}
Since $R_\infty$ satisfies the same hypotheses as in Lemma \ref{th3.1}, we only have to treat the addition of the kink. 
We have, by Lemma \ref{th3.1} and Proposition \ref{th:kink}
\[
\norm{W}_\infty+\norm{\nabla W}_\infty\leq \norm{ K}_\infty+\norm{ R_\infty}_\infty+\norm{\nabla K}_\infty+\norm{\nabla R_\infty}_\infty\leq C.
\]
Note that by exponential decay $\nabla K\in L^2(\R)$, therefore, combined with Lemma \ref{th3.1} this gives
\[
\norm{\nabla W}_2 \leq \norm{\nabla  K}_2+\norm{\nabla R_\infty}_2\leq C.
\]
We now estimate the source term $H$. As in the proof of Lemma \ref{th3.1}, we fix $t>0$, take any $x\in\mathbb \R$ and choose $m=m(x)$ corresponding to the nearest profile, i.e.
\[
|x-v_mt|=\min_{j\in\mathbb N}|x-v_jt|.
\]
If $m\geq 1$, then as in the proof of Lemma \ref{th3.2B}, we still have 
\[
|(R_\infty-R_m)(t,x)|\leq Ce^{-\frac12av_*t},
\]
and by Proposition  \ref{th:kink} it holds
\[
|K(t,x)|\leq D_ae^{-a |x-v_0t|}\leq D_ae^{-\frac12av_*t}.
\]
Therefore, 
if $m\geq1$ we have
\[
H(t,x)\leq |f(R_\infty)-\sum_{j\in\mathbb N}f(R_j)|+A_4 |K|+|f(K)|\lesssim e^{-\frac12av_*t},
\]
where $A_4=\max_{s\in[0,\norm{W}_\infty]}f'(s)$. 
If $m=0$, we replace the previous estimate by 
\[
H(t,x)\leq A_4|R_\infty|+\sum_{j\in\mathbb N}|f(R_j)|\lesssim e^{-\frac12av_*t}.
\]
This implies that 
\[
\norm{H(t)}_\infty\lesssim  e^{-\frac12av_*t}.
\]

With $x$ and $m$ as above, if $m=0$, we have (using a similar expression as \eqref{4.5})
\begin{align*}
|\nabla H(t,x)|&\lesssim 
	(|f_z(K+R_\infty)-f_z(K)|+|f_{\bar z}(K+R_\infty)-f_{\bar z}(K)|)|\nabla K|+\sum_{j\in\N}|\nabla R_j|
\end{align*}


Since we are close to the kink ($m=0$), the last sum will be small :
\[
\sum_{j\in\N}|\nabla R_j|\lesssim e^{-\frac a4 v_* t}\sum_{j\in\N}\bka{v_j}\omega_j^{1/\alpha_1}e^{-\frac12\omega_j ^{1/2}|x-v_jt|}.
\]
In addition we have
\[
(|f_z(K+R_\infty)-f_z(K)|+|f_{\bar z}(K+R_\infty)-f_{\bar z}(K)|)\lesssim |R_\infty|\lesssim  e^{-\frac a4 v_* t}\sum_{j\in\N}\omega_j^{1/\alpha_1}e^{-\frac12\omega_j ^{1/2}|x-v_jt|}.
\]
Therefore 
\[
|\nabla H(t,x)| \lesssim  e^{-\frac a4 v_* t}\sum_{j\in\N}\bka{v_j}\omega_j^{1/\alpha_1}e^{-\frac12\omega_j ^{1/2}|x-v_jt|}.
\]
The estimate for the case $m\geq 1$ is similar as in Lemma  \ref{th3.1} and we can conclude by \eqref{Ass4b} that
\[
\norm{\nabla H}_2 \lesssim e^{-\lambda t}\sum_{j\in\N_0}\bka{v_j}\omega_j^{1/\alpha_1-d/4}\leq e^{-\lambda t}V_* .
\]

Let now $s$ be defined as in the proof of Lemma \ref{th3.1}. By \eqref{th1.4-eq1}, we can further assume
\begin{equation}\label{eq4.4}
r_0 \le s(\ab+1).
\end{equation}
For simplicity in notations, assume that the kink in not moving, i.e. $v_0=0$. Therefore the main contribution will come from the kink for $x<0$ and the soliton train for $x>0$. We have on the right
\begin{multline*}
\norm{H}_{L^s(x>0)}\leq \norm{f(K+R_\infty)-f(R_\infty)}_{L^s(x>0)}+\norm{f(K)}_{L^s(x>0)}+\norm{f(R_\infty)}_{L^s}+\sum_{j\in\N}\norm{f(R_j)}_{L^s}\\
\leq A_4 \norm{K}_{L^s(x>0)}+C<+\infty,
\end{multline*}
where the last inequality is due to exponential decay to $0$ on the right for the kink. On the left, we have 
\begin{equation*}
\norm{H}_{L^s(x<0)}\leq \norm{f(K+R_\infty)-f(K)}_{L^s(x<0)}+\sum_{j\in\N}\norm{f(R_j)}_{L^s}
\end{equation*}
The first term cannot be treated as previously (unless $R_\infty\in L^s(\R)$, which is a priori not the case). 
%
%
Since $f$ verifies \eqref{F1-2},   by the mean value theorem we  have 
\[
|f(K+R_\infty)-f(K)|\lesssim \left((|K-b|+|R_\infty| )^{\tilde\alpha} +(|K-b|+|R_\infty| )^{\alpha_2}\right)|R_\infty|
\]
Hence,
\[
\norm{f(K+R_\infty)-f(K)}_{L^s(x<0)}\lesssim \left(\norm{K-b}_{L^1(x<0)}^{\tilde\alpha}+\norm{K-b}_{L^1(x<0)}^{\alpha_2} \right) \norm{R_\infty}_{L ^\infty}+\norm{R_\infty}^{1+\tilde\alpha}_{L^{s(\tilde\alpha+1)}}
\]
The right hand side is finite since $K$ converges exponentially to $b$ and the $L^{s(\tilde\alpha+1)}$-norm of $R_\infty$ is finite thanks to our choice of $r_0$ and
\eqref{eq4.4}. In conclusion,
\[
\norm{H}_{L^s}\leq \norm{H}_{L^s(x<0)}+\norm{H}_{L^s(x>0)}<+\infty.
\]
By interpolation between $s<2$ and $\infty$  we get 
\[
\norm{H}_{L^2}\lesssim e^{-\lambda t}.
\]
This concludes the proof.

\end{proof}

\begin{proof}[Proof of  Theorem \ref{th1.4}]
By Lemma \ref{lem:6.1}, there exists $v_\sharp$ such that if $v_*>v_\sharp$, then the hypothesis \eqref{v600a1} of Proposition
\ref{prop_main_1} is satisfied under the assumptions of Theorem
\ref{th1.4}. The conclusion of the Theorem then follows immediately from the conclusion of Proposition \ref{prop_main_1}.
\end{proof}

\section*{Acknowledgments}
The authors are grateful to Dong Li for stimulating discussions at the origin of this work.
The research of S. Le Coz is supported in part by the french ANR
through project ESONSE.  The research of Tsai is supported in part by
NSERC grant 261356-13 (Canada).

\bibliographystyle{habbrv}
\bibliography{biblio}

\begin{thebibliography}{10}

\bibitem{Agmon}
S.~Agmon.
\newblock {\em Lectures on exponential decay of solutions of second-order
  elliptic equations: bounds on eigenfunctions of {$N$}-body {S}chr\"odinger
  operators}, volume~29 of {\em Mathematical Notes}.
\newblock Princeton University Press, Princeton, NJ, 1982.

\bibitem{ChGuNaTs07}
S.-M. Chang, S.~Gustafson, K.~Nakanishi, and T.-P. Tsai.
\newblock Spectra of linearized operators for {NLS} solitary waves.
\newblock {\em SIAM J. Math. Anal.}, 39(4):1070--1111, 2007/08.

\bibitem{CoGaYa09}
C.~Cort{\'a}zar, M.~Garc{\'{\i}}a-Huidobro, and C.~S. Yarur.
\newblock On the uniqueness of the second bound state solution of a semilinear
  equation.
\newblock {\em Ann. Inst. H. Poincar\'e Anal. Non Lin\'eaire},
  26(6):2091--2110, 2009.

\bibitem{CoGaYa11}
C.~Cort{\'a}zar, M.~Garc{\'{\i}}a-Huidobro, and C.~S. Yarur.
\newblock On the uniqueness of sign changing bound state solutions of a
  semilinear equation.
\newblock {\em Ann. Inst. H. Poincar\'e Anal. Non Lin\'eaire}, 28(4):599--621,
  2011.

\bibitem{CoLC11}
R.~C{\^o}te and S.~Le~Coz.
\newblock High-speed excited multi-solitons in nonlinear {S}chr\"odinger
  equations.
\newblock {\em J. Math. Pures Appl. (9)}, 96(2):135--166, 2011.

\bibitem{CoMaMe11}
R.~C{\^o}te, Y.~Martel, and F.~Merle.
\newblock Construction of multi-soliton solutions for the {$L^2$}-supercritical
  g{K}d{V} and {NLS} equations.
\newblock {\em Rev. Mat. Iberoam.}, 27(1):273--302, 2011.

\bibitem{GNT04}
S.~Gustafson, K.~Nakanishi, and T.-P. Tsai.
\newblock Asymptotic stability and completeness in the energy space for
  nonlinear {S}chr\"odinger equations with small solitary waves.
\newblock {\em Int. Math. Res. Not.}, (66):3559--3584, 2004.

\bibitem{Ka95}
S.~Kamvissis.
\newblock Focusing nonlinear {S}chr\"odinger equation with infinitely many
  solitons.
\newblock {\em J. Math. Phys.}, 36(8):4175--4180, 1995.

\bibitem{LLT1}
S.~Le~Coz, D.~Li, and T.-P. Tsai.
\newblock Fast-moving finite and infinite trains of solitons for nonlinear
  {S}chr{\"o}dinger equations.
\newblock 2013, arXiv:1304.3049.

\bibitem{MaMe05}
Y.~Martel and F.~Merle.
\newblock Asymptotic stability of solitons of the subcritical g{K}d{V}
  equations revisited.
\newblock {\em Nonlinearity}, 18(1):55--80, 2005.

\bibitem{MaMe06}
Y.~Martel and F.~Merle.
\newblock {Multi solitary waves for nonlinear {S}chr{\"o}dinger equations}.
\newblock {\em Ann. Inst. H. Poincar{\'e} Anal. Non Lin{\'e}aire},
  23(6):849--864, 2006.

\bibitem{MaMeTs06}
Y.~Martel, F.~Merle, and T.-P. Tsai.
\newblock Stability in {$H^1$} of the sum of {$K$} solitary waves for some
  nonlinear {S}chr\"odinger equations.
\newblock {\em Duke Math. J.}, 133(3):405--466, 2006.

\bibitem{Me90}
F.~Merle.
\newblock Construction of solutions with exactly {$k$} blow-up points for the
  {S}chr\"odinger equation with critical nonlinearity.
\newblock {\em Comm. Math. Phys.}, 129(2):223--240, 1990.

\bibitem{Pe04}
G.~Perelman.
\newblock Asymptotic stability of multi-soliton solutions for nonlinear
  {S}chr\"odinger equations.
\newblock {\em Comm. Partial Differential Equations}, 29(7-8):1051--1095, 2004.

\bibitem{ZaSh72}
V.~E. Zakharov and A.~B. Shabat.
\newblock Exact theory of two-dimensional self-focusing and one-dimensional
  self-modulation of waves in nonlinear media.
\newblock {\em \v Soviet Physics JETP}, 34(1):62--69, 1972.

\end{thebibliography}

\end{document}